%
%
%
%
\documentclass[12pt]{amsart}
\usepackage{amssymb}
\usepackage{mathrsfs}
\usepackage{amssymb}
\usepackage{amsfonts}
\usepackage[english]{babel}
\usepackage[T1]{fontenc}
\usepackage[latin1]{inputenc}
\usepackage{fullpage}
\usepackage{amsmath}
\usepackage[all]{xy}
\usepackage{stmaryrd}

\newtheorem{theorem}{Theorem}[section]
\newtheorem{lemma}[theorem]{Lemma}
\newtheorem{corollary}[theorem]{Corollary}
\theoremstyle{definition}
\newtheorem{definition}[theorem]{Definition}

\newtheorem{proposition}[theorem]{Proposition}
\newtheorem{conjecture}[theorem]{Conjecture}

\theoremstyle{remark}
\newtheorem{remark}[theorem]{Remark}

\numberwithin{equation}{section}



\makeatletter
\let\origsection\section
\renewcommand\section{\@ifstar{\starsection}{\nostarsection}}

\newcommand\nostarsection[1]
{\sectionprelude\origsection{#1}\sectionpostlude}

\newcommand\starsection[1]
{\sectionprelude\origsection*{#1}\sectionpostlude}

\newcommand\sectionprelude{%
  \vspace{1em}
}

\newcommand\sectionpostlude{%
  \vspace{1em}
}
\makeatother

\begin{document}

\title{On Brown-York mass and compactly conformal deformations of scalar curvature}



\author{Wei Yuan}
\address{(Wei Yuan) Department of Mathematics, University of California, Santa Cruz, CA 95064}
\email{wyuan2@ucsc.edu}




\keywords{Brown-York mass, compactly conformal deformation, nondecreasing scalar
curvature, nonincreasing scalar curvature, rigidity.}
\thanks{This work is support by NSF grant DMS-1005295 and DMS-1303543. }

\begin{abstract}
In this article, we found a connection between Brown-York mass and the first Dirichlet Eigenvalue of a Schr\"odingier type operator. In particular, we proved a local positive mass type theorem for metrics conformal to the background one with suitable presumptions. As applications, we investigated compactly conformal deformations which either increase or decrease scalar curvature. We found local conformal rigidity phenomena occur in both cases for small domains and as for manifolds with nonpositive scalar curvature it is even more rigid in particular. On the other hand, such deformations exist for closed manifolds with positive scalar curvature. We also constructed such kind of deformations on a type of product manifolds that either increase or decrease their scalar curvature compactly and conformally.  These results together answered a natural question arises in \cite{Corvino, Lohkamp}.
\end{abstract}

\maketitle

\section{Introduction}

One of the most important results in differential geometry during the passed decades is the \emph{Positive Mass Theorem} proved by Schoen and Yau (\cite{S-Y1, S-Y2}) and Witten (\cite{Witten}) in 1980s. It has been a source of inspiration of many interesting works since it appeared. Among these works, rigidity concerning deformations with nondecreasing scalar curvature is one of the central topics. \\

In fact, before \emph{Positive Mass Theorem} was proved, Fischer and Marsden had proved a rigidity result for perturbations of flat metrics on closed manifolds in 1975 (\cite{F-M}).  As for domains, it was observed by Miao that \emph{Positive Mass Theorem} implies the rigidity of unit Euclidean ball with respect to deformations of nondecreasing scalar curvature inside and mean curvature on the boundary, provided the induced metric on the boundary is spherical. Around the same time, Shi and Tam showed that this rigidity holds for any convex domain in the Euclidean space. In fact, they proved that for such a domain its Brown-York mass is nonnegative and vanishes if and only it is Euclidean (\cite{S-T}). \\

An alternative way to generalize the rigidity part of \emph{Positive Mass Theorem} was done by Min-Oo. He proved a similar rigidity result for strongly asymptotically hyperbolic spin manifold in 1989 (\cite{Min-Oo1}) . As a quick corollary, it implies that any compact deformation of hyperbolic metric, which keeps the scalar curvature nondecreasing has to be isometric to the canonical hyperbolic metric. In fact, the strong asymptotically hyperbolic presumption in Min-Oo's result can be reduced to a weaker one due to Andersson and Dahl's work (\cite{A-D}).\\

Motivated by the rigidity part of \emph{Positive Mass Theorem} and its analogue in the asymptotically hyperbolic setting, Min-Oo proposed the following conjecture in 1995.

\begin{conjecture} [Min-Oo \cite{Min-Oo2}]
For $n\geq 2$, suppose $g$ is a smooth metric on the upper hemisphere $S^n_{+}$, which satisfies the following properties,
\begin{itemize}
\item $R(g) \geq n(n-1)$
\item $g|_{_{\partial S^n_+}} = g_{_{S^{n-1}}}$
\item $\partial S^n_+$ is totally geodesic with respect to $g$.
\end{itemize}
Then $(S^n_+, g)$ is isometric to the canonical upper hemisphere.
\end{conjecture}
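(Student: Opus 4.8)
The natural line of attack is the spinorial argument Min-Oo used in the asymptotically hyperbolic setting, adapted to a domain with finite boundary. Since $S^n_+$ is spin, recall that the model $(S^n_+, g_{S^n})$ carries a maximal-dimensional space of imaginary Killing spinors, solutions of $\nabla_X\psi = \tfrac{i}{2}X\cdot\psi$. I would introduce on the spinor bundle of $(S^n_+, g)$ the modified connection $\widehat\nabla_X = \nabla_X - \tfrac{i}{2}X\cdot$ and the modified Dirac operator $\widehat D = D + \tfrac{ni}{2}$, together with the Weitzenb\"ock identity $\widehat D^2\psi = \widehat\nabla^{*}\widehat\nabla\psi + \tfrac14\big(R(g)-n(n-1)\big)\psi$, and then solve the boundary value problem $\widehat D\psi = 0$ on $S^n_+$ under a local MIT-bag / chirality-type boundary condition along the totally geodesic hypersurface $\partial S^n_+$, chosen so that $\widehat D$ is formally self-adjoint and the identity integrates, after Stokes' theorem, to $\int_{S^n_+}\!\big(|\widehat\nabla\psi|^2 + \tfrac14(R(g)-n(n-1))|\psi|^2\big) + \int_{\partial S^n_+}\!\mathcal B(\psi) = 0$, where $\mathcal B(\psi)$ is a boundary term built from the second fundamental form of $\partial S^n_+$ and the restriction of $\psi$.

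The steps, in order, would be: (i) establish the precise Weitzenb\"ock formula and boundary term on a manifold with boundary; (ii) choose the boundary condition so that, using that $\partial S^n_+$ is totally geodesic with $g|_{\partial S^n_+} = g_{S^{n-1}}$, the term $\mathcal B(\psi)$ vanishes or has a sign consistent with $R(g)\ge n(n-1)$; (iii) solve the boundary value problem, producing a full-dimensional space of solutions of $\widehat D\psi = 0$; (iv) conclude $\widehat\nabla\psi\equiv 0$ and $R(g)\equiv n(n-1)$, deduce that $g$ admits the maximal space of imaginary Killing spinors, hence has constant sectional curvature $1$, and finish by a boundary/connectedness argument identifying $(S^n_+,g)$ with the round hemisphere. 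A parallel, non-spin route --- closer to the theme of this paper --- would be to glue a model end (or double the domain across its totally geodesic boundary) and appeal to a positive-mass-type inequality and its rigidity case, relating the relevant quasi-local quantity to the first Dirichlet eigenvalue of a Schr\"odinger operator of the form $-\Delta + cR$ as in the body of the article.

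The hard part --- and here it is fatal --- is step (ii). On the hemisphere the boundary integral $\int_{\partial S^n_+}\mathcal B(\psi)$ does not have a definite sign: unlike the conformally-infinite boundary in the asymptotically hyperbolic case, a finite totally geodesic boundary carrying the round metric $g_{S^{n-1}}$ does not supply enough structure to control it, essentially because the model's imaginary Killing spinors do not restrict to boundary eigenspinors compatible with a chirality condition that closes the estimate (the non-self-adjointness introduced by the factor $i$ in $\widehat\nabla$ is the source of the trouble). This is not a mere technical gap: the conjecture is in fact \emph{false} for $n\ge 3$, as Brendle, Marques and Neves constructed metrics on $S^n_+$ satisfying all three hypotheses that are not isometric to the round hemisphere. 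Consequently the realistic target is a conditional statement --- for instance restricting to metrics conformal to $g_{S^n}$ in the interior, or imposing a convexity hypothesis on the boundary --- and it is exactly such a conditional local conformal rigidity that this paper establishes through its Brown-York mass / first-eigenvalue estimate.
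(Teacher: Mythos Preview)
Your assessment is correct, and in particular your bottom line matches the paper exactly: this statement is recorded in the paper as a \emph{conjecture}, not as a theorem, and the paper gives no proof of it. Immediately after stating it, the paper recalls that it is true in dimension $2$ by Toponogov, true in higher dimensions within the conformal class of the round metric by Hang--Wang, but \emph{false} in general for $n\ge 3$ by the Brendle--Marques--Neves counterexamples. So there is nothing to compare your argument to; the paper's ``proof'' of this item is simply the historical remark that the conjecture was disproved.

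Your diagnosis of where the spinorial approach breaks down is reasonable and consistent with the known obstruction: the imaginary Killing spinor / modified Dirac argument does not close because the finite totally geodesic boundary fails to make the boundary integrand $\mathcal B(\psi)$ have a sign, and indeed it cannot, since a valid sign would prove a false statement. Your closing paragraph---that the realistic target is a conditional or conformal result, and that this is precisely what the present paper pursues via the Brown--York mass / first Dirichlet eigenvalue of $-\Delta_{\bar g}-\tfrac{R_{\bar g}}{n-1}$---is exactly the role this conjecture plays here: motivation, not a theorem to be established.
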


In dimension 2, the classic theorem of Toponogov shows Min-Oo's conjecture is actually true. As for higher dimensions, when restricted in the conformal class of the canonical spherical metric, Hang and Wang gave an affirmative answer to the conjecture in 2006 (\cite{H-W1}). In fact, their result is sharp in the sense that if the support of the deformation contains the upper hemisphere strictly, then there is a conformal deformation satisfies the boundary condition with nondecreasing scalar curvature but strictly increasing somewhere.  \\

Although people made many attempts trying to solve Min-Oo's conjecture, unfortunately it was disproved in 2010. Brendle, Marques and Neves constructed a counter example for Min-Oo's Conjecture by combining technics of perturbation and gluing (\cite{B-M-N}, see also \cite{C-L-M} for a generalization). \\

However, when considering a geodesic ball strictly contained in upper hemisphere, Brendle and Marques showed that rigidity phenomena do occur if the size of the geodesic ball is less than a certain number and the metric is not far away from the canonical spherical one (\cite{B-M}). (For an improvement of the size of the geodesic ball, see \cite{C-M-T}). We refer the excellent survey article \cite{Brendle} for those who are interested in the history of Min-Oo conjecture. \\

Inspired by \cite{B-M}, Qing and the author generalized above rigidity result to generic vacuum static spaces ({\cite{Q-Y2}}). This is in fact a sharp rigidity result due to Corvino's work on the stability of non-vacuum static domains (\cite{Corvino}). For a detailed discussion of vacuum static spaces, please see \cite{Q-Y1}.\\

When restricted in conformal deformations, Qing and the author achieved a sharp rigidity result for vacuum static spaces with positive scalar curvature  (c.f. \cite{Q-Y2}), which generalized Hang and Wang's work in \cite{H-W1} on upper hemisphere. On the other hand, motivated by a question proposed by Escobar (\cite{Escobar}), Barbosa, Mirandola and Vitorio found an elegant integral identity and with the aid of which they proved the rigidity part independently in a more general setting (\cite{B-M-V}). )\\

On the other hand, as a comparison, it would be interesting to know what happens if we require deformations decrease scalar curvature instead of increase it as what we discussed previously. In fact, not many works were known with respect to this question to the best of the authors knowledge. Among them, Lohkamp's result is the most well-known one (see \cite{Lohkamp}). He showed that there is a generic compact deformation which decrease scalar curvature for an arbitrary Riemannian manifold. However, it was not clear that such a deformation can be realized within conformal classes or not due to Lohkamp's proof. So we would like to ask the question: does such a conformal deformation exists? Moreover, we can ask similar questions for Corvino's constructions (see \cite{Corvino}).\\

Before we answer these questions, we introduce the following well-known notion due to Brown and York (see \cite{B-Y1, B-Y2}). For the purpose of our article, we restrict metrics in the conformal class of the background metric.

\begin{definition}
For $n \geq 2$, let $(\Omega^n, \bar g)$ be an $n$-dimensional compact Riemannian manifold with smooth boundary $\partial \Omega$. Then for any metric $g \in [\bar g]$ on $\Omega$ with $g = \bar g$ on $\partial \Omega$, the \emph{Brown-York mass relative to $\bar g$} is defined to be the quantity 
$$
m_{BY}( \partial \Omega, \bar g; g ) = \int_{\partial \Omega} \left( H_{\bar g} - H_g \right)\ d\sigma_{\bar g},
$$ where $H_{\bar g}$ and $H_g$ are mean curvatures of $\bar g$ and $g$ of $\partial \Omega$ with respect to the outward normals respectively.
\end{definition}

Another notion we will used frequently is the following one:
\begin{definition}
We denote the first Dirichlet eigenvalue of the Schr\"odinger type operator $$\mathscr{L}_{\bar g} : = - \Delta_{\bar g} - \frac{R_{\bar g}}{n-1}$$ on the domain $\Omega \subset M$ as $$\Lambda_1(\Omega, \bar g) = \inf_{\varphi \in C_0^\infty (\Omega)} \frac{\int_\Omega\ \varphi\ \mathscr{L}_{\bar g} \varphi \ dv_{\bar g}}{ \int_\Omega \varphi^2\ dv_{\bar g}}$$
and $\phi \not\equiv 0$ is a corresponding eigenfunction, if $$\mathscr{L}_{\bar g} \phi = \Lambda_1(\Omega, \bar g) \phi.$$
\end{definition}

Note that the operator $\mathscr{L}_{\bar g}$ appeared as the trace of $L^2$-formal adjoint of the linearization of scalar curvature up to a constant (see \cite{F-M}): $$\gamma_{\bar g}^* = Hess_{\bar g} - \bar g\Delta_{\bar g} - Ric_{\bar g} : C^\infty(M) \rightarrow S_2(M).$$

Now we can state our main theorem as follow, which can be viewed as an analogue of Shi and Tam's positive mass theorem (cf. \cite{S-T}):

\begin{theorem}\label{thm:rigidity}
For $n \geq 2$, let $(M^n,\bar{g})$ be an $n$-dimensional compact Riemannian manifold with smooth boundary. Suppose the first Dirichlet eigenvalue of $\mathscr{L}_{\bar g}$ on $M$ satisfies that $$\Lambda_1 (M, \bar g) > 0.$$

$(I)$ :  For any metric $g_+ \in [\bar{g}]$ with 
$$ g_+ = \bar g$$ 
on $\partial M$ and in addition, 
$$
 R_{g_+} \geq R_{\bar{g}} \quad \text{on  $M$},
$$
we have 
$$
m_{BY}( \partial M, \bar g; g_+ ) \geq 0
$$
and equality holds if and only if when $g_+ = \bar g$ on $M$.\\

$(II)$: For any $\delta \in (0,1)$ and any metric $g_- \in [\bar{g}]$ with $$||g_-||_{C^0 (M, \bar g)} < \alpha := \left\{ \aligned  &\sqrt{n} \left(  1 + \frac{(n-1) \delta \Lambda_1(M, \bar g)}{\max_M R_{\bar g}} \right)^{\frac{4}{n+2}}   &\quad \text{ when $\max_{M} R_{\bar g} > 0$}\\ 
& + \infty,  &\quad \text{ when $\max_{M} R_{\bar g} \leq 0$}
\endaligned\right.$$ and 
$$g_- = \bar g$$ 
on $\partial M$ also in addition, 
$$
R_{g_-} \leq R_{\bar{g}}  \quad \text{on  $M$}
$$
we have 
$$
m_{BY}( \partial M, \bar g; g_- ) \leq 0
$$
and equality holds if and only if when $g_- = \bar g$ on $M$.
\end{theorem}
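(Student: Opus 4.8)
\emph{Plan.} The plan is to pass to the conformal factor, which converts both the scalar curvature hypothesis and the Brown--York mass into statements about a single elliptic equation whose linearization at the trivial solution is exactly $\mathscr{L}_{\bar g}$; the hypothesis $\Lambda_1(M,\bar g)>0$ is then precisely the positivity of the principal Dirichlet eigenvalue of that linearized operator, which supplies the (strong) maximum principle we need. Assume $n\ge 3$ and write $g_\pm=u^{4/(n-2)}\bar g$ with $u>0$ and $u\equiv 1$ on $\partial M$ (for $n=2$ write $g_\pm=e^{2u}\bar g$ and argue identically). The conformal transformation law gives $-\tfrac{4(n-1)}{n-2}\Delta_{\bar g}u+R_{\bar g}u=R_{g_\pm}u^{(n+2)/(n-2)}$ in $M$, and the transformation of mean curvature together with $u\equiv 1$ on $\partial M$ gives $H_{\bar g}-H_{g_\pm}=-\tfrac{2(n-1)}{n-2}\partial_{\bar\nu}u$ on $\partial M$, so by the divergence theorem
\[
m_{BY}(\partial M,\bar g;g_\pm)=-\frac{2(n-1)}{n-2}\int_M\Delta_{\bar g}u\,dv_{\bar g}=-\frac{2(n-1)}{n-2}\int_{\partial M}\partial_{\bar\nu}u\,d\sigma_{\bar g}.
\]
Thus it suffices to control the sign of $\partial_{\bar\nu}u$ along $\partial M$, and for the rigidity to show $u\equiv 1$.

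Next I recast the problem for $w:=u-1$, which vanishes on $\partial M$. Substituting $R_{g_\pm}=R_{\bar g}+(R_{g_\pm}-R_{\bar g})$ in the conformal equation and rearranging yields
\[
(n-1)\,\mathscr{L}_{\bar g}w=\frac{n-2}{4}\,(R_{g_\pm}-R_{\bar g})\,u^{\frac{n+2}{n-2}}-R_{\bar g}\,\sigma(u),\qquad \sigma(t):=\frac{n-2}{4}\bigl(t-t^{\frac{n+2}{n-2}}\bigr)+(t-1).
\]
A direct check gives $\sigma(1)=\sigma'(1)=0$ and $\sigma''<0$ on $(0,\infty)$, so $\sigma\le 0$ everywhere with equality only at $t=1$; moreover $\sigma(t)=-\tau(t)(t-1)^2$ for a continuous $\tau>0$, and convexity of $t\mapsto t^{(n+2)/(n-2)}$ gives the elementary bound $\tau(t)\,|t-1|\le\bigl|t^{(n+2)/(n-2)}-1\bigr|$. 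In particular the linearization of the displayed equation at $u\equiv1$ is exactly $\mathscr{L}_{\bar g}$, so $\Lambda_1(M,\bar g)>0$ says that $\mathscr{L}_{\bar g}$ — and, whenever the zeroth order term is perturbed by a potential $q$ with $\sup_M q$ below $\Lambda_1$, also $\mathscr{L}_{\bar g}-q$ — satisfies the strong maximum principle and the Hopf boundary lemma on $M$.

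The heart of the argument is to locate $u$ with respect to $1$. Write the equation above as $(\mathscr{L}_{\bar g}-q)w=\tfrac{n-2}{4(n-1)}(R_{g_\pm}-R_{\bar g})u^{(n+2)/(n-2)}$ with $q:=\tfrac{1}{n-1}R_{\bar g}\,\tau(u)\,w$. In case $(II)$, $R_{g_-}\le R_{\bar g}$ makes the right side $\le 0$; testing against $w^+=\max(w,0)$ (which vanishes on $\partial M$) and using the Rayleigh characterization of $\Lambda_1$ gives $\bigl(\Lambda_1-\sup_{\{w>0\}}q\bigr)\int(w^+)^2\le 0$, while on $\{w>0\}$ the bound on $\tau$ yields $q\le\tfrac{1}{n-1}(\max_M R_{\bar g})\bigl((\sup_M u)^{(n+2)/(n-2)}-1\bigr)$. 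Since $\|g_-\|_{C^0(M,\bar g)}=\sqrt n\,(\sup_M u)^{4/(n-2)}$, the hypothesis $\|g_-\|_{C^0(M,\bar g)}<\alpha$ is exactly the condition $(\sup_M u)^{(n+2)/(n-2)}-1<\tfrac{(n-1)\delta\Lambda_1}{\max_M R_{\bar g}}$, so $\sup_{\{w>0\}}q\le\delta\Lambda_1<\Lambda_1$ and hence $w^+\equiv0$, i.e.\ $u\le 1$; when $\max_M R_{\bar g}\le 0$ one has $q\le 0$ on $\{w>0\}$ automatically, which is why $\alpha=+\infty$ there, and more generally $R_{\bar g}\le 0$ makes $\mathscr{L}_{\bar g}$ coercive, accounting for the sharper rigidity noted in the introduction. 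In case $(I)$, $R_{g_+}\ge R_{\bar g}$ makes the right side $\ge 0$; testing against $w^-=\max(-w,0)$ and using the sign of $\sigma$ on $(0,1)$ together with the bound on $\tau$ forces $w^-\equiv 0$, i.e.\ $u\ge 1$. Once $u$ lies on the correct side of $1$, the fact that $u\equiv1$ on $\partial M$ gives $\partial_{\bar\nu}u\le 0$ (case I) resp.\ $\partial_{\bar\nu}u\ge 0$ (case II), so $m_{BY}\ge 0$ resp.\ $m_{BY}\le 0$; and if $m_{BY}=0$ then $\partial_{\bar\nu}u\equiv 0$ on $\partial M$, so $w$ is a one-signed solution of $(\mathscr{L}_{\bar g}-q)w\gtrless 0$ with $w=\partial_{\bar\nu}w=0$ on $\partial M$, whence the Hopf lemma forces $w\equiv 0$ and $g_\pm=\bar g$.

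The main obstacle is the third step. In case $(II)$ the delicate point is to push the perturbation estimate to the sharp threshold: one must use the precise inequality $\tau(t)|t-1|\le|t^{(n+2)/(n-2)}-1|$ and keep careful track of all constants (including the rescaling contained in $\|g_-\|_{C^0(M,\bar g)}=\sqrt n\,(\sup_M u)^{4/(n-2)}$) to recover exactly the constant $\alpha$. In case $(I)$ the subtle point is to control the potential $q$ on the region where $R_{\bar g}$ is negative so that the maximum principle applies with no size restriction at all; this requires exploiting the full sign structure of $\sigma$ and of $\rho(t):=t-t^{(n+2)/(n-2)}$ (in particular $\rho\ge 0$ on $(0,1]$) together with the non-negative term $(R_{g_+}-R_{\bar g})u^{(n+2)/(n-2)}$ that one is tempted to discard, rather than a crude pointwise bound on $q$.
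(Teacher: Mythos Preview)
Your route is genuinely different from the paper's. The paper argues \emph{pointwise}: it extends $(M,\bar g)$ slightly to $M_\varepsilon$ so that a first Dirichlet eigenfunction $\phi>0$ of $\mathscr{L}_{\bar g}$ on $M_\varepsilon$ is strictly positive on all of $\bar M$, rewrites the scalar curvature inequality as $-\Delta_{\bar g}v-\tilde w(x)\,v\ge 0$ with $v=u_+-1$ (resp.\ $1-u_-$) and $\tilde w=\frac{R_{\bar g}}{n-1}\cdot\frac{n-2}{4}\,\frac{u^{(n+2)/(n-2)}-u}{u-1}$, and then rules out an interior negative minimum of $v/\phi$ by a second-order computation at that point. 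You instead work \emph{variationally}: you recast the equation as $(\mathscr{L}_{\bar g}-q)w=\text{(signed term)}$ and test against $w^\pm$, using the Rayleigh quotient for $\Lambda_1$. Your method avoids the auxiliary extension $M_\varepsilon$ and the barrier $\phi$ altogether; the paper's method avoids your potential $q$ and the algebra around $\sigma$ and $\tau$. In case~(II) your bookkeeping is correct and recovers exactly the constant $\alpha$ via $\|g_-\|_{C^0}=\sqrt n\,(\sup_M u)^{4/(n-2)}$ and the inequality $\tau(t)(t-1)\le t^{(n+2)/(n-2)}-1$ for $t>1$; this matches the paper's pointwise estimate at the minimum.

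There is, however, a genuine gap in your case~(I), and you have located it yourself without closing it. Testing against $w^-$ you need $\sup_{\{w<0\}}q<\Lambda_1$. On $\{w<0\}$ one has $u<1$, so $q=\frac{R_{\bar g}}{n-1}\tau(u)w$ with $w<0$, $\tau(u)>0$; hence $q\le 0$ automatically where $R_{\bar g}\ge 0$, but where $R_{\bar g}<0$ the potential is positive and your bound $\tau(t)|t-1|\le |t^{(n+2)/(n-2)}-1|$ only yields $q\le \frac{|R_{\bar g}|}{n-1}\bigl(1-u^{(n+2)/(n-2)}\bigr)\le \frac{|R_{\bar g}|}{n-1}$, which need not lie below $\Lambda_1$. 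Your last paragraph proposes to ``exploit the full sign structure of $\sigma$ and $\rho$'' and to retain the nonnegative term $(R_{g_+}-R_{\bar g})u^{(n+2)/(n-2)}$, but you do not show how these combine to control $q$ with no size restriction on $u$, and it is not evident that they do. (It is worth remarking that the paper's argument is equally brief at exactly this spot: it asserts that the pointwise bound $\tilde w(x_0)<\frac{R_{\bar g}(x_0)}{n-1}$ is ``trivial for (I)'', which is immediate only when $R_{\bar g}(x_0)\ge 0$; when $R_{\bar g}(x_0)<0$ one has instead $\tilde w(x_0)>\frac{R_{\bar g}(x_0)}{n-1}$, and one still needs $\tilde w(x_0)-\frac{R_{\bar g}(x_0)}{n-1}<\Lambda_1(M_\varepsilon,\bar g)$.) So your diagnosis of where the difficulty lies is exactly right, but as written your case~(I) is not complete when $R_{\bar g}$ is allowed to be negative somewhere.
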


\begin{remark}
When $\bar g$ is flat and $g$ is asymptotically flat, the presumption $$R_g \geq R_{\bar g} = 0$$ is usually referred as \emph{Dominant Energy Condition} in general relativity.
\end{remark}

If the diameter of the manifold is sufficiently small, the first eigenvalue of Laplacian can be sufficiently large. Hence the above theorem always holds on a sufficiently small domain in an arbitrary Riemannian manifold:

\begin{corollary}\label{cor:rig_small_domain}
For $n \geq 2$, let $(M^n,\bar{g})$ be an $n$-dimensional Riemannian manifold.  For any $p \in M$, there exists an $r_0 > 0$, such that for any domain $\Omega\subset M$ contains $p$ with $\Omega \subset B_{r_0}(p)$, the metric $g \in [\bar{g}]$ is a conformal deformation of $\bar g$ supported in $\Omega$ which satisfies either
$$R_g \geq R_{\bar{g}}$$ or $$R_g \leq R_{\bar g}$$ with $||g - \bar g||_{C^0(M, \bar g)}$ sufficiently small on $M$. Then we have $g \equiv \bar{g}$ on $M$. In particular, this implies that $\bar g$ is an isolated solution of the problem
$$
\left\{  \aligned &R(g) = R_{\bar g}\\ 
&g\in [\bar g] \\ 
&supp(g - \bar g) \subset \Omega,
\endaligned\right.
$$
if the diameter of $\Omega$ is sufficiently small.

\end{corollary}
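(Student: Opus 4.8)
The plan is to obtain Corollary~\ref{cor:rig_small_domain} as a direct application of Theorem~\ref{thm:rigidity}, the key point being that a domain of small diameter automatically satisfies the spectral hypothesis $\Lambda_1 > 0$. From the Rayleigh quotient defining $\Lambda_1$ one has the elementary bound
$$
\Lambda_1(\Omega,\bar g) \;\ge\; \lambda_1(\Omega,\bar g) - \frac{1}{n-1}\max_{\overline\Omega} R_{\bar g},
$$
where $\lambda_1(\Omega,\bar g)$ denotes the first Dirichlet eigenvalue of $-\Delta_{\bar g}$. Working in geodesic normal coordinates centered at $p$, for $r$ small the ball $B_r(p)$ is uniformly bi-Lipschitz to a Euclidean $r$-ball, so $\lambda_1(B_r(p),\bar g)$ is bounded below by a positive constant times $r^{-2}$; combined with domain monotonicity of $\lambda_1$ this gives $\lambda_1(\Omega,\bar g) \ge \lambda_1(B_{r_0}(p),\bar g) \to +\infty$ as $r_0 \to 0$ for all $\Omega \subset B_{r_0}(p)$. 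Since $R_{\bar g}$ is bounded near $p$, we may therefore fix $r_0 = r_0(p) \in (0,\operatorname{inj}(p))$ with $\lambda_1(B_{r_0}(p),\bar g) > \frac{1}{n-1}\max_{\overline{B_{r_0}(p)}} R_{\bar g}$; then $\Lambda_1(\Omega,\bar g) > 0$ for every domain $\Omega \subset B_{r_0}(p)$.

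Given such a domain $\Omega \ni p$ with $\Omega \subset B_{r_0}(p)$ and a conformal deformation $g \in [\bar g]$ with $\operatorname{supp}(g-\bar g) \subset \Omega$, choose a regular value $r \in (0,r_0)$ of $\operatorname{dist}_{\bar g}(p,\cdot)$ such that $\operatorname{supp}(g-\bar g) \subset B_r(p)$, and set $M' := \overline{B_r(p)}$, a compact manifold with smooth boundary and $\Lambda_1(M',\bar g) > 0$. Since $g = \bar g$ in a neighborhood of $\partial M'$ we have $H_g = H_{\bar g}$ on $\partial M'$, hence $m_{BY}(\partial M',\bar g;g) = 0$. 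If $R_g \ge R_{\bar g}$ on $M$, then Theorem~\ref{thm:rigidity}$(I)$ applied to $(M',\bar g)$ and its equality case force $g = \bar g$ on $M'$, hence on all of $M$. If instead $R_g \le R_{\bar g}$ on $M$, I must also verify the smallness condition $\|g\|_{C^0(M',\bar g)} < \alpha$ from part $(II)$ (taking, e.g., $\delta = \tfrac12$); but $\|\bar g\|_{C^0(M',\bar g)} = \sqrt n$, whereas, because $\Lambda_1(M',\bar g) > 0$, the constant $\alpha$ satisfies $\alpha > \sqrt n$ when $\max_{M'} R_{\bar g} > 0$ and $\alpha = +\infty$ otherwise. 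So if $\|g-\bar g\|_{C^0(M,\bar g)}$ is small enough that $\|g\|_{C^0(M',\bar g)} < \alpha$ --- precisely the hypothesis of the corollary --- part $(II)$ gives $m_{BY}(\partial M',\bar g;g) \le 0$ with equality only if $g = \bar g$, and again $g \equiv \bar g$ on $M$.

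For the last assertion, if $\operatorname{diam}_{\bar g}(\Omega)$ is small enough then $\Omega \subset B_{r_0}(p)$ for any chosen $p \in \Omega$, so any $g \in [\bar g]$ solving $R(g) = R_{\bar g}$ with $\operatorname{supp}(g-\bar g) \subset \Omega$ has in particular $R_g \ge R_{\bar g}$, and part $(I)$ --- which needs no smallness --- yields $g = \bar g$; thus $\bar g$ is the unique, a fortiori isolated, such solution on small domains. I anticipate no serious obstacle here: the corollary is essentially a repackaging of Theorem~\ref{thm:rigidity}, and the only points demanding some care are the choice of a regular value so that $\partial M'$ is smooth, and the observation that the threshold $\alpha$ in part $(II)$ automatically exceeds $\|\bar g\|_{C^0}$ as soon as $\Lambda_1 > 0$, so that mere $C^0$-closeness of $g$ to $\bar g$ suffices.
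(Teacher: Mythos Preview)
Your proposal is correct and follows essentially the same route as the paper: bound $\Lambda_1(\Omega,\bar g)$ from below by $\lambda_1(\Omega,\bar g)-\frac{1}{n-1}\max R_{\bar g}$, invoke the $\lambda_1(B_r)\gtrsim r^{-2}$ estimate (the paper cites this as the Karp--Pinsky lemma) to make $\Lambda_1>0$ on a small ball, and then apply Theorem~\ref{thm:rigidity}. Your version is in fact more careful than the paper's terse ``the conclusion follows,'' since you explicitly pass to a regular level set to ensure $\partial M'$ is smooth and check that the $C^0$ threshold $\alpha$ in part~$(II)$ strictly exceeds $\sqrt{n}=\|\bar g\|_{C^0}$ once $\Lambda_1>0$.
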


This corollary shows that in particular there is no conformal perturbation increase or decrease scalar curvature with sufficiently small support. It suggests that deformations result in \cite{ Corvino, Lohkamp} is sharp in the sense that the generic deformations out of conformal classes are necessary.\\

Another interesting conclusion is that Brown-York mass behave perfect on manifolds with nonpositive scalar curvature, which suggests that they are much more rigid in terms of compactly conformal deformations.

\begin{corollary}\label{cor:non_pos_scalar}
For $n \geq 2$, let $(M, \bar g)$ be a Riemannian manifold with scalar curvature $R_{\bar g} \leq 0$. Then for any compactly contained domain $\Omega \varsubsetneqq M$ and any metric $g \in [\bar g]$ on $\Omega$ with $$ g = \bar g$$ on $\partial \Omega$.

$(I)$: Suppose
$$
R_g \geq R_{\bar{g}}  \quad \text{on  $\Omega$},
$$
then $$
m_{BY}( \partial \Omega, \bar g; g ) \geq 0.
$$

$(II)$: Suppose
$$
R_g \leq R_{\bar{g}}  \quad \text{on  $\Omega$},
$$
then $$
m_{BY}( \partial \Omega, \bar g; g ) \leq 0.
$$
In either case, the equality holds if and only $ g = \bar g$ on $\Omega$.

In particular, these imply that $g = \bar g$ is the unique solution to the following problem 
$$
\left\{  \aligned &R(g) = R_{\bar g}\\ 
&g\in [\bar g] \\ 
&supp(g - \bar g) \subset \Omega.
\endaligned\right.
$$

\end{corollary}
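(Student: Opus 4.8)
\textit{Proof proposal.} The plan is to deduce all three statements from Theorem~\ref{thm:rigidity} applied on $\Omega$ (more precisely on $(\bar\Omega,\bar g)$) in place of $M$. The only genuine points to check are that the eigenvalue hypothesis $\Lambda_1(\Omega,\bar g)>0$ holds automatically once $R_{\bar g}\le 0$, and that the $C^0$-smallness assumption in part $(II)$ of Theorem~\ref{thm:rigidity} becomes vacuous in the present setting.

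First I would verify the spectral bound. Since $\Omega\varsubsetneqq M$ is compactly contained, $\bar\Omega$ is compact with nonempty boundary, so the first Dirichlet eigenvalue $\lambda_1(\Omega,\bar g)$ of $-\Delta_{\bar g}$ on $\Omega$ is strictly positive. For any $\varphi\in C_0^\infty(\Omega)$, the assumption $-R_{\bar g}\ge 0$ gives
\[
\int_\Omega \varphi\,\mathscr{L}_{\bar g}\varphi\,dv_{\bar g}
= \int_\Omega \Big( \abs{\nabla\varphi}_{\bar g}^2 - \tfrac{R_{\bar g}}{n-1}\,\varphi^2\Big)\,dv_{\bar g}
\ge \int_\Omega \abs{\nabla\varphi}_{\bar g}^2\,dv_{\bar g}
\ge \lambda_1(\Omega,\bar g)\int_\Omega \varphi^2\,dv_{\bar g}.
\]
Hence $\Lambda_1(\Omega,\bar g)\ge \lambda_1(\Omega,\bar g)>0$, so the hypothesis of Theorem~\ref{thm:rigidity} is satisfied on $(\bar\Omega,\bar g)$.

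For part $(I)$ I would then simply invoke Theorem~\ref{thm:rigidity}$(I)$ with $g_+=g$: since $g=\bar g$ on $\partial\Omega$ and $R_g\ge R_{\bar g}$ on $\Omega$, it yields $m_{BY}(\partial\Omega,\bar g;g)\ge 0$, with equality if and only if $g\equiv\bar g$ on $\Omega$. For part $(II)$, note that $R_{\bar g}\le 0$ on $M$ forces $\max_{\Omega} R_{\bar g}\le 0$, so in Theorem~\ref{thm:rigidity}$(II)$ the threshold is $\alpha=+\infty$ and the requirement $\|g\|_{C^0(\Omega,\bar g)}<\alpha$ holds trivially for every $\delta\in(0,1)$; applying that part with $g_-=g$ gives $m_{BY}(\partial\Omega,\bar g;g)\le 0$, again with equality if and only if $g\equiv\bar g$ on $\Omega$.

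Finally, for the uniqueness assertion, suppose $g\in[\bar g]$ solves $R(g)=R_{\bar g}$ with $\mathrm{supp}(g-\bar g)\subset\Omega$. Then $g=\bar g$ in a neighborhood of $\partial\Omega$, so $g=\bar g$ and $H_g=H_{\bar g}$ along $\partial\Omega$, whence $m_{BY}(\partial\Omega,\bar g;g)=0$. Since $R_g=R_{\bar g}$, we may apply either part $(I)$ or part $(II)$ above, and the equality case forces $g=\bar g$ on $\Omega$, hence $g=\bar g$ on all of $M$. I do not expect a serious obstacle here beyond the spectral estimate; the one technical point that requires a little care is ensuring that $\Omega$ may be taken with smooth boundary (or approximated from inside by such domains) so that Theorem~\ref{thm:rigidity} applies verbatim.
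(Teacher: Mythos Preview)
Your proposal is correct and follows essentially the same approach as the paper: the paper's own proof simply observes that $R_{\bar g}\le 0$ forces $\Lambda_1(\Omega,\bar g)>0$ and then says the conclusion follows from Theorem~\ref{thm:rigidity}. You have just spelled out the details the paper leaves implicit (the inequality $\Lambda_1\ge\lambda_1>0$, the fact that $\alpha=+\infty$ in part $(II)$, and the equality-case reasoning for uniqueness).
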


It is natural to ask what happens to manifolds with positive scalar curvature? We have already known that if the domain is sufficiently small, rigidity phenomena occur. So it is interesting to investigate the phenomena in large domains. In fact, we have the following deformation result, which suggests that the vanishing of Brown-York mass won't imply the conformal rigidity as it does previously:

\begin{theorem}\label{thm:deformation}
For $n \geq 2$, let $(M, \bar g)$ be a Riemannian manifold with scalar curvature $R_{\bar g} >0$. Suppose there exists a compactly contained domain $\Omega \subset M$ with the first Dirichlet eigenvalue of $\mathscr{L}_{\bar g}$ satisfies that
$$
\Lambda_1(\Omega, \bar g) < 0.
$$

Then there are smooth metrics $g_+, g_- \in [\bar g]$ such that $$supp\ (\bar g - g_+),\ supp\ (\bar g - g_-) \subset \Omega $$ and scalar curvatures satisfy that $$R_{g_+} \geq R_{\bar g}$$ and  $$R_{g_-} \leq R_{\bar g}$$ with strict inequality holding inside $\Omega$ respectively. 
\end{theorem}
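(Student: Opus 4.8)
The plan is to produce the two deformations by perturbing $\bar g$ conformally along the first Dirichlet eigenfunction of $\mathscr{L}_{\bar g}$ on a slightly smaller domain, in the conformal direction in which $\mathscr{L}_{\bar g}$ is unstable. I will treat $n\ge 3$; the case $n=2$ is identical after replacing $(1+v)^{\frac{4}{n-2}}$ by $e^{2w}$, $(1+v)^{\frac{n+2}{n-2}}$ by $e^{2w}$ and the constant $\frac{4(n-1)}{n-2}$ by $2$ throughout. For $g=(1+v)^{\frac{4}{n-2}}\bar g$ with $1+v>0$ and $v$ supported in $\Omega$, the transformation law for scalar curvature gives the exact identity
\[
(1+v)^{\frac{n+2}{n-2}}\big(R_g-R_{\bar g}\big)=\frac{4(n-1)}{n-2}\,\mathscr{L}_{\bar g}v-R_{\bar g}\,E(v),\qquad E(v):=(1+v)^{\frac{n+2}{n-2}}-1-\tfrac{n+2}{n-2}\,v .
\]
Since $s\mapsto(1+s)^{\frac{n+2}{n-2}}$ is convex on $(-1,\infty)$ we have $E(v)\ge 0$, and $R_{\bar g}>0$ on $\Omega$ by hypothesis.

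The whole problem then reduces to constructing a single function: a $v\in C_0^\infty(\Omega)$ with $v\ge 0$, $v\not\equiv 0$, $\mathscr{L}_{\bar g}v\le 0$ on $\Omega$, and $\mathscr{L}_{\bar g}v<0$ on the open set $U:=\{v>0\}$. Granting this, set $g_-:=(1+v)^{\frac{4}{n-2}}\bar g$ and $g_+:=(1-\tau v)^{\frac{4}{n-2}}\bar g$ for $\tau>0$ small (so $1-\tau v>0$); then $\mathrm{supp}\,(\bar g-g_\pm)=\overline{U}\subset\Omega$. For $g_-$ the right‑hand side of the identity is $\le 0$ (all of $\mathscr{L}_{\bar g}v\le 0$, $R_{\bar g}>0$, $E(v)\ge 0$) and it is $<0$ on $U$ because there $E(v)>0$; hence $R_{g_-}\le R_{\bar g}$ with strict inequality on $U$. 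For $g_+$ the right‑hand side becomes $-\frac{4(n-1)}{n-2}\tau\,\mathscr{L}_{\bar g}v-R_{\bar g}E(-\tau v)$, whose first term is positive of order $\tau$ on $U$ while $E(-\tau v)=O(\tau^2)$; since $(-\mathscr{L}_{\bar g}v)/v^2$ is bounded below by a positive constant on $U$ (it stays positive on the bulk of $\overline{U}$ and blows up as $v\to 0$), the first term dominates once $\tau$ is small, so $R_{g_+}\ge R_{\bar g}$ with strict inequality on $U$.

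To construct $v$ I use the instability hypothesis. Since the first Dirichlet eigenvalue of $\mathscr{L}_{\bar g}$ is nonincreasing and continuous along exhaustions, there is a smooth domain $\Omega''$ with $\bar\Omega''\subset\Omega$ and $\Lambda_1(\Omega'',\bar g)<0$. Let $\phi>0$ be its first eigenfunction; by elliptic regularity $\phi\in C^\infty(\bar\Omega'')$, and by the Hopf lemma $\partial_\nu\phi<0$ on $\partial\Omega''$ ($\nu$ outward). Extended by zero, $\phi$ satisfies, as a distribution, $\mathscr{L}_{\bar g}\phi=\Lambda_1(\Omega'',\bar g)\phi$ in $\Omega''$ plus the surface term $-|\partial_\nu\phi|\,d\sigma_{\bar g}$ on $\partial\Omega''$; both contributions are $\le 0$, so $\mathscr{L}_{\bar g}\phi\le 0$ and is not identically zero. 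Now smooth $\phi$ by the heat semigroup of $\mathscr{L}_{\bar g}$ itself, with Dirichlet conditions on a fixed compact $\Omega'\supset\supset\Omega$: set $\phi_t:=e^{-t\mathscr{L}_{\bar g}}\phi$. Since $\mathscr{L}_{\bar g}=-\Delta_{\bar g}-\tfrac{R_{\bar g}}{n-1}$ has a bounded potential, $e^{-t\mathscr{L}_{\bar g}}$ is positivity preserving (Feynman–Kac) and commutes with $\mathscr{L}_{\bar g}$; hence $\phi_t\ge 0$ is smooth, $\mathscr{L}_{\bar g}\phi_t=e^{-t\mathscr{L}_{\bar g}}(\mathscr{L}_{\bar g}\phi)\le 0$, and in fact $\mathscr{L}_{\bar g}\phi_t<0$ in $\mathrm{int}\,\Omega'$ since the heat kernel is strictly positive and $\mathscr{L}_{\bar g}\phi\not\equiv 0$. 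Finally put $v:=\chi_t\phi_t$, where $\chi_t\in C_0^\infty(\Omega)$ is a cutoff equal to $1$ on the $t^{1/3}$‑neighborhood of $\bar\Omega''$ and vanishing outside its $2t^{1/3}$‑neighborhood (which lies in $\Omega$ for $t$ small as $\bar\Omega''\subset\subset\Omega$). On $\{\chi_t=1\}$ one has $\mathscr{L}_{\bar g}v=\mathscr{L}_{\bar g}\phi_t<0$; on the transition shell the Gaussian upper bound for the heat kernel of $\mathscr{L}_{\bar g}$ makes $\mathscr{L}_{\bar g}\phi_t$ a negative quantity of size $\sim t^{-1/2}e^{-ct^{-1/3}}$ (its main part is the mollified negative surface measure), which dominates the cutoff cross terms $-2\nabla\chi_t\cdot\nabla\phi_t-\phi_t\Delta_{\bar g}\chi_t$ of size at most $\sim t^{-1/6}e^{-ct^{-1/3}}$; hence $\mathscr{L}_{\bar g}v\le 0$ on $\Omega$, with $\mathscr{L}_{\bar g}v<0$ on $U$, once $t$ is small.

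The step I expect to be the main obstacle is this smoothing. The eigenfunction only furnishes a Lipschitz competitor, with a corner along $\partial\Omega''$, and an ordinary mollification at a fixed scale $\varepsilon$ introduces a term in $\mathscr{L}_{\bar g}v_\varepsilon$ of indefinite sign and size $O(\varepsilon)$ that, near $\partial\Omega''$ where $v_\varepsilon$ is small, is not controlled by the quadratically small good term $R_{\bar g}E(v_\varepsilon)$; likewise, writing $v=f(\phi)$ for a scalar profile $f$ fails because any such $f$ must be concave somewhere in the transition layer, producing a positive $\mathscr{L}_{\bar g}v$ there. Smoothing by the flow of $\mathscr{L}_{\bar g}$ — rather than by an arbitrary mollifier — is what removes these errors, since it is simultaneously sign preserving and commuting and supplies a genuinely large negative contribution to $\mathscr{L}_{\bar g}\phi_t$ near $\partial\Omega''$; everything else (the eigenvalue continuity, the Hopf input, the Gaussian‑bound bookkeeping near $\partial\Omega''$, and the smallness of $\tau$ for $g_+$) is routine.
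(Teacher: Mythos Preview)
Your route is genuinely different from the paper's. The paper builds the deformation in two pieces and invokes an external tool: a collar metric near $\partial\Omega$ using the conformal factor $1\pm e^{-1/\phi}$ (with $\phi$ the first Dirichlet eigenfunction of $-\Delta_{\bar g}$), an interior metric built from the first $\mathscr{L}_{\bar g}$-eigenfunction on a slightly smaller level set, and then the Brendle--Marques--Neves conformal gluing theorem to patch across the seam. Your approach instead produces a single compactly supported $v\ge 0$ with $\mathscr{L}_{\bar g}v\le 0$ by smoothing the zero-extended eigenfunction with the heat semigroup of $\mathscr{L}_{\bar g}$ and cutting off in a thin shell; this is more self-contained (no gluing black box) and exploits directly the positivity of the heat kernel and its commutation with $\mathscr{L}_{\bar g}$. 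The paper's modular construction, on the other hand, makes each estimate elementary and yields $\mathrm{supp}(\bar g-g_\pm)=\overline{\Omega}$ with strict inequality on all of $\Omega$.

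Two points deserve care. First, in your shell estimate you appeal to ``the Gaussian upper bound'' to get $\mathscr{L}_{\bar g}\phi_t\sim -t^{-1/2}e^{-ct^{-1/3}}$, but what you actually need there is a \emph{lower} bound on $|\mathscr{L}_{\bar g}\phi_t|$ with the \emph{same} exponential constant as the upper bounds on $\phi_t$ and $\nabla\phi_t$; generic two-sided Li--Yau bounds have different constants, and the resulting mismatch $e^{(1/c_1-1/C_1)t^{-1/3}}$ would swamp your polynomial gain $t^{-1/2}/t^{-1/6}$. The argument is rescued by the precise short-time asymptotic $K_t(x,y)\sim(4\pi t)^{-n/2}e^{-d(x,y)^2/(4t)}$ (Minakshisundaram--Pleijel, plus the bounded-potential Feynman--Kac factor $1+O(t)$), which gives matching constants on the scale $d\sim t^{1/3}$; you should invoke this rather than a generic Gaussian bound. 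Second, your $U=\{v>0\}$ is the $2t^{1/3}$-neighborhood of $\overline{\Omega''}$, strictly inside $\Omega$, so your $g_\pm$ agree with $\bar g$ on the nonempty open set $\Omega\setminus\overline{U}$ and the scalar curvature inequality is an equality there; the theorem as stated asks for strict inequality throughout $\Omega$, which the paper's $e^{-1/\phi}$ collar does deliver. This is cosmetic (one can relabel $U$ as $\Omega$, since $\Lambda_1(U,\bar g)<0$), but worth flagging.
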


If the manifold is closed, such domains always exists. Hence we can always have the following deformation result:
\begin{corollary}\label{cor:non_rig_closed}
For $n \geq 2$, let $(M, \bar g)$ be a closed Riemannian manifold with scalar curvature $R_{\bar g} >0$. Then there is a compactly contained domain $\Omega \subset M$ and smooth metrics $g_+, g_- \in [\bar g]$ such that $$supp\ (\bar g - g_+),\ supp\ (\bar g - g_-) \subset \Omega $$ and scalar curvatures satisfy that $$R_{g_+} \geq R_{\bar g}$$ and  $$R_{g_-} \leq R_{\bar g}$$ respectively with strict inequality holding inside $\Omega$. 

\end{corollary}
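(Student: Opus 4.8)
\emph{Proof proposal.} The plan is to deduce Corollary \ref{cor:non_rig_closed} from Theorem \ref{thm:deformation} by exhibiting, on any closed $(M,\bar g)$ with $R_{\bar g}>0$, a proper subdomain $\Omega \varsubsetneqq M$ with $\Lambda_1(\Omega,\bar g)<0$. Such a domain is automatically compactly contained in $M$ (its closure is a closed, hence compact, proper subset of the closed manifold $M$), so once it is found, Theorem \ref{thm:deformation} applies verbatim and produces the desired metrics $g_\pm\in[\bar g]$ with $\mathrm{supp}(\bar g-g_\pm)\subset\Omega$, $R_{g_+}\geq R_{\bar g}$, $R_{g_-}\leq R_{\bar g}$, and strict inequality inside $\Omega$.

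To locate $\Omega$, fix a point $p\in M$ and, for small $r>0$, set $\Omega_r:=M\setminus\overline{B_r(p)}$; since $n\geq 2$ and $M$ is connected, $\Omega_r$ is a connected proper open subset of $M$. I would estimate $\Lambda_1(\Omega_r,\bar g)$ from above by testing its Rayleigh quotient against a cutoff function $\chi_r\in C_0^\infty(\Omega_r)$ that vanishes on a neighbourhood of $\overline{B_r(p)}$, equals $1$ outside $B_{2r}(p)$, and satisfies $\int_M|\nabla\chi_r|_{\bar g}^2\,dv_{\bar g}\to 0$ as $r\to 0$. Such $\chi_r$ exist precisely because a point has zero capacity in dimension $n\geq 2$ (a power cutoff on the annulus works for $n\geq 3$, a logarithmic one for $n=2$). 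Integrating by parts, $\int_{\Omega_r}\chi_r\,\mathscr{L}_{\bar g}\chi_r\,dv_{\bar g}=\int_M\bigl(|\nabla\chi_r|_{\bar g}^2-\tfrac{R_{\bar g}}{n-1}\chi_r^2\bigr)dv_{\bar g}$; as $r\to 0$ the first term tends to $0$ and, since $\chi_r\to 1$ pointwise and $0\leq\chi_r\leq 1$, dominated convergence gives that the whole expression tends to $-\tfrac{1}{n-1}\int_M R_{\bar g}\,dv_{\bar g}<0$, while $\int_{\Omega_r}\chi_r^2\,dv_{\bar g}\to \mathrm{Vol}(M,\bar g)>0$. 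Hence for all sufficiently small $r$ the Rayleigh quotient of $\chi_r$ is negative, so $\Lambda_1(\Omega_r,\bar g)<0$.

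Fixing one such $r$ and putting $\Omega:=\Omega_r$, I would then apply Theorem \ref{thm:deformation} with this $\Omega$; its conclusion is exactly the statement of the corollary. The only ingredient that is not purely formal is the construction of the cutoff $\chi_r$ with vanishing Dirichlet energy in the limit — the capacity estimate — and this is where the hypothesis $n\geq 2$ genuinely enters; the cutoff must be chosen according to whether $n=2$ or $n\geq 3$, but in both cases the estimate is classical.
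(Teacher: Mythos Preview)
Your argument is correct and follows the same strategy as the paper: remove a small geodesic ball, test the Rayleigh quotient with a cutoff supported in the complement, and then invoke Theorem~\ref{thm:deformation}. The paper packages this as a separate lemma showing $\lambda_1(\Omega_r,\bar g)\to 0$ and then uses $\Lambda_1\leq\lambda_1-\min_M\frac{R_{\bar g}}{n-1}$, whereas you estimate $\Lambda_1$ directly; both are fine. The one substantive difference is the two-dimensional case: the paper invokes uniformization (reducing to $S^2$ or $\mathbb{R}P^2$ and quoting the known behavior of $\lambda_1$ on complements of shrinking disks), while your capacity argument via a logarithmic cutoff is more self-contained and avoids that detour.

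One imprecision to fix: you require $\chi_r\equiv 1$ outside $B_{2r}(p)$ \emph{and} $\int|\nabla\chi_r|^2\to 0$. For $n\geq 3$ a linear cutoff on the annulus $B_{2r}\setminus B_r$ does both, but for $n=2$ these two requirements are incompatible---a cutoff transitioning on an annulus of fixed ratio has Dirichlet energy bounded below by a positive constant. The logarithmic cutoff you allude to needs an outer radius $R(r)$ with $R(r)\to 0$ and $R(r)/r\to\infty$ (e.g.\ $R(r)=\sqrt{r}$); then $\int|\nabla\chi_r|^2\lesssim 1/\log(R/r)\to 0$ and the rest of your argument goes through unchanged.
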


However, if the manifold $(M, \bar g)$ is complete but noncompact, whether the positivity of scalar curvatures implies the existence of such domains is not clear. But if $(M, \bar g)$ has quadratic volume growth, such domains do exists and thus we have the following deformation result: 

\begin{corollary}\label{cor:non_rig_noncpt}
For $n \geq 2$, let $(M, \bar g)$ be a complete noncompact Riemannian manifold with scalar curvature $R_{\bar g} > Q > 0$, where $Q$ is a positive constant. Suppose that $(M, \bar g)$ has quadratic volume growth, then there is a compactly contained domain $\Omega \subset M$ and smooth metrics $g_+, g_- \in [\bar g]$ such that $$supp\ (\bar g - g_+),\ supp\ (\bar g - g_-) \subset \Omega $$ and scalar curvatures satisfy that $$R_{g_+} \geq R_{\bar g}$$ and  $$R_{g_-} \leq R_{\bar g}$$ respectively with strict inequality holding inside $\Omega$. 

\end{corollary}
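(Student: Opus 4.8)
The plan is to reduce the statement to Theorem \ref{thm:deformation}: it suffices to exhibit a compactly contained domain $\Omega \varsubsetneqq M$ with $\Lambda_1(\Omega,\bar g) < 0$, since then Theorem \ref{thm:deformation} produces the desired metrics $g_\pm$ directly (note $R_{\bar g} > Q > 0$ in particular gives $R_{\bar g} > 0$, so the hypotheses of that theorem are met once such a $\Omega$ is found). Recall that by definition
$$
\Lambda_1(\Omega,\bar g) = \inf_{\varphi \in C_0^\infty(\Omega)} \frac{\int_\Omega \left( |\nabla \varphi|_{\bar g}^2 - \frac{R_{\bar g}}{n-1}\varphi^2 \right)\, dv_{\bar g}}{\int_\Omega \varphi^2\, dv_{\bar g}},
$$
so to make this negative I need a test function $\varphi$ for which the Dirichlet energy $\int |\nabla \varphi|^2$ is small compared to $\frac{1}{n-1}\int R_{\bar g}\varphi^2 \geq \frac{Q}{n-1}\int \varphi^2$. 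This is exactly where the quadratic volume growth hypothesis enters: it is the classical obstruction to a positive bottom of the spectrum of the Laplacian, and it will let me build cutoff functions with arbitrarily small Rayleigh quotient for $-\Delta_{\bar g}$.

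Concretely, I would fix a point $p \in M$ and use the logarithmic cutoff trick of Cheng--Yau / Karp. Let $r(x) = d_{\bar g}(p,x)$ and, for large $R$, set $\varphi_R$ to equal $1$ on $B_R(p)$, to equal $\frac{2\log R - \log r}{\log R}$ (i.e. interpolating down to $0$) on the annulus $B_{R^2}(p)\setminus B_R(p)$, and $0$ outside $B_{R^2}(p)$; this is Lipschitz, compactly supported in $\Omega_R := B_{R^2}(p)$, and can be smoothed. Then $|\nabla \varphi_R| \le \frac{1}{r\log R}$ on the annulus, so
$$
\int_M |\nabla \varphi_R|_{\bar g}^2\, dv_{\bar g} \le \frac{1}{(\log R)^2}\int_{B_{R^2}(p)\setminus B_R(p)} \frac{dv_{\bar g}}{r^2}.
$$
Decomposing the annulus into dyadic pieces $B_{2^{k+1}}\setminus B_{2^k}$ and using $\mathrm{Vol}(B_s(p)) \le C s^2$ (quadratic volume growth), each dyadic piece contributes at most a constant, and there are $O(\log R)$ of them, so the right-hand side is $O\!\left(\frac{1}{\log R}\right) \to 0$ as $R \to \infty$. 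On the other hand $\int_M \varphi_R^2\, dv_{\bar g} \ge \mathrm{Vol}(B_R(p)) \to \infty$, in particular stays bounded below by a positive constant. Hence
$$
\Lambda_1(\Omega_R,\bar g) \le \frac{\int |\nabla \varphi_R|^2 - \frac{1}{n-1}\int R_{\bar g}\varphi_R^2}{\int \varphi_R^2} \le \frac{\int |\nabla\varphi_R|^2}{\int \varphi_R^2} - \frac{Q}{n-1} \longrightarrow -\frac{Q}{n-1} < 0,
$$
so for $R$ large enough $\Lambda_1(\Omega_R,\bar g) < 0$. If $\Omega_R = M$ for all large $R$ (i.e.\ $M$ has finite diameter, which cannot happen for a complete noncompact manifold), this would be an issue, but completeness plus noncompactness guarantees $r$ is unbounded, so each $\Omega_R$ is a proper compactly contained subset. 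Applying Theorem \ref{thm:deformation} with $\Omega = \Omega_R$ finishes the proof.

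The main technical obstacle is making the cutoff estimate rigorous: one must ensure the smoothing of $\varphi_R$ does not spoil the gradient bound (standard — mollify at scale much smaller than the annulus width, or simply work with Lipschitz test functions since $C_0^\infty$ and Lipschitz compactly supported functions give the same infimum for $\Lambda_1$), and one must handle the possibility that $B_{R^2}(p)$ is not precompact — but completeness of $(M,\bar g)$ and the Hopf--Rinow theorem give that all metric balls are precompact, so $\Omega_R$ is a genuine compactly contained domain with smooth (after perturbation) boundary. No new ideas beyond the classical volume-growth-implies-zero-in-the-spectrum argument are needed; everything else is bookkeeping.
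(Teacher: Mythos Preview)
Your proposal is correct and follows the same strategy as the paper: reduce to Theorem~\ref{thm:deformation} by producing a compactly contained domain with $\Lambda_1(\Omega,\bar g)<0$, using the quadratic volume growth to make the Dirichlet Rayleigh quotient of a cutoff arbitrarily small and then subtracting the uniform lower bound $\tfrac{Q}{n-1}$ for $\tfrac{R_{\bar g}}{n-1}$. The only cosmetic difference is that the paper quotes Cheng--Yau for $\lambda_1(M,\bar g)=0$ as a black box, whereas you reproduce that argument explicitly via the logarithmic cutoff.
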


Finally, we would like to discuss the subtle issue of domains which has critical eigenvalue $$\Lambda_1(\Omega, \bar g) = 0$$ briefly. According to \cite{B-M-V}, in order to get rigidity we only need to assume $g = \bar g$ on $\partial \Omega$ and $R_g \geq R_{\bar g}$ on $\Omega$, which means the Brown-York mass can only be vanished in this case and we won't get its positivity. But it is not clear right now whether the same phenomena appear when we assume scalar curvature nonincreasing instead and we hope to address this issue in the future.\\

The article is organized as follow: Section 2 is mainly about the discussion of Brown-York mass and conformal rigidity where Theorem \ref{thm:rigidity}, Corollary \ref{cor:rig_small_domain} and \ref{cor:non_pos_scalar} will be proved; Section 3 is devoted to the construction of conformal deformations which suggest the rigidity fails.\\

\paragraph{\textbf{Acknowledgement}}
The author would like to express his deepest appreciations to Professor Jie Qing for his inspiring discussions and constantly encouragements. \\

\section{Brown-York mass and rigidity of compactly conformal deformations}

In this section, we will prove results associated to the Brown-York mass and investigate rigidity phenomena of conformally compact deformations.

\begin{proof}[Proof of Theorem \ref{thm:rigidity}]

For $g\in [\bar g]$, there exists a smooth function $u$ such that
$$
g = \left\{ \aligned  e^{2u}\bar g  & \quad \text{ when $n=2$}\\ 
u^\frac 4{n-2}\bar g & \quad \text{ when $n\geq 3$}.
\endaligned\right.
$$

It is well known that for $u = 1$ on $\partial M$,
$$
R_g = \left\{\aligned e^{-2u} \left( R_{\bar{g}} - 2 \Delta_{\bar g} u \right)  & \quad \text{ when $n=2$}\\ 
 u^{-\frac{n+2}{n-2}} \left( R_{\bar{g}} u - \frac {4(n-1)}{n-2}\Delta_{\bar g} u \right) & \quad \text{ when $n\geq 3$} 
 \endaligned\right.
$$
and
$$  
 H_g = \left\{ \aligned H_{\bar g} + 2\partial_\nu u   & \quad \text{ when $n=2$}\\ 
 H_{\bar g} + \frac {2(n-1)}{n-2}\partial_\nu u & \quad \text{ when $n\geq 3$},
\endaligned\right.
 $$
where $\nu$ is the outward normal of $\partial M$ with respect to $\bar g$.\\
 
Let 
\begin{align*}
w (x) =
\begin{cases}
\frac{ R_{\bar{g}} (e^{2u(x)} - 1)}{2 u(x)} = R_{\bar g} e^{2\xi} & \quad \text{ when $n = 2$} \\
\frac{\frac {n-2}{4(n-1)} R_{\bar{g}} u(x) \left(u(x)^{\frac{4}{n-2}} - 1\right)}{u(x)-1} = \frac {R_{\bar g}}{n-1} \xi^\frac 4{n-2} \frac u \xi 
& \quad \text{ when $n\geq 3$},
\end{cases}
\end{align*}
where $\xi(x)$ is between $0$ and $u(x)$ when $n=2$; $\xi(x)$ is between $1$ and $u(x)$ when $n\geq 3$.\\

Assume that for $(I)$, 
$$
m_{BY}(\partial M, \bar g; g_+ ) \leq 0;
$$
and for $(II)$,
$$
m_{BY}( \partial M, \bar g; g_- ) \geq 0.
$$
We are going to show that in both cases $g_+ = g_- = \bar g$ on $M$ and thus the theorem follows.\\

Let $u_+$ be the smooth function for the conformal factor of $g_+$ and $u_-$ for $g_-$ respectively.\\

We take for $(I)$, 
\begin{align*}
v(x) = \begin{cases} u_+(x) & \quad \text{ when $n = 2$} \\
u_+(x) - 1& \quad \text{ when $n\geq 3$}
\end{cases}
\end{align*}
and for $(II)$
\begin{align*}
v(x) = 
\begin{cases} 
-u_-(x) & \quad \text{ when $n = 2$} \\
1 - u_-(x) & \quad \text{ when $n\geq 3$}.
\end{cases}
\end{align*}

Thus we have 
\begin{equation*}\label{scalar-cur}
\left\{\aligned - \Delta_{\bar g} v - w(x) v & \geq 0 \quad \text{ in $M$}\\
v & = 0 \quad \text{ on $\partial M$}\\
\int_{\partial M} \partial_{\nu} v\ d\sigma_{\bar g} &\geq 0.\\
\endaligned\right.
\end{equation*}

We claim that $$v (x) \geq 0$$ on $M$.\\

Suppose not, there exists a point $\bar{x} \in M$ such that $$v(\bar{x}) < 0.$$

For any $\varepsilon > 0$, let $ M_\varepsilon$ be an $\varepsilon$-extension of $(M, \bar g)$ along the boundary $\partial M$. That is, we extend $\bar g$ smoothly on a smooth manifold $M_\varepsilon$ with boundary $\partial M_\varepsilon$ which contains the interior of $M$ as an open submanifold and satisfies that $dist_{\bar g}(\partial M, \partial M_{\varepsilon}) = \varepsilon$.\\

Since $\Lambda_1 (M, \bar g) > 0$, for any $0< \delta < 1$, there exists an $\varepsilon > 0$ small such that $$\Lambda_1 (M_\varepsilon, \bar g) > \delta \Lambda_1 (M, \bar g) > 0.$$

Let $\phi$ be a first eigenfunction associated to $\Lambda_1 (M_\varepsilon, \bar g)$ and from the standard theory we can choose $\phi$ to be positive on $M_\varepsilon$. Take $\varphi (x) = \frac{v(x)}{\phi(x)}$, then $$\varphi \equiv  0$$ on $\partial M$ and $$\varphi(\bar{x}) < 0.$$

Let $x_0 \in M$ be the point where $\varphi$ attains its minimum on $M$. Then we have $$- \Delta_{\bar{g}} \varphi (x_0) \leq 0, \ \ \ \nabla \varphi(x_0) = 0.$$

Clearly, $\varphi(x_0) < 0$, which would imply $v(x_0) < 0$ and thus $w(x_0) < \frac{R_{\bar g}}{n-1} < \frac{R_{\bar g}}{n-1} + \delta \Lambda_1 (M, \bar g)$. This is trivial for $(I)$ and for $(II)$ with $\max_M R_{\bar g} \leq 0$; as for the case of $\max_M R_{\bar g} > 0$, it can be deduced from the presumption that $$||g_-||_{C^0 (M, \bar g)} < \alpha = \sqrt{n} \left(  1 + \frac{(n-1) \delta \Lambda_1(M, \bar g)}{\max_M R_{\bar g}} \right)^{\frac{4}{n+2}} .$$ \\

In fact, for $n \geq 3$, $v(x_0) < 0$ implies that $1 < \xi(x_0) < u_-(x_0)$ and thus $$w(x_0) \leq \frac{R_{\bar g}}{n-1} u_-^{\frac{n+2}{n-2}}(x_0) \leq \frac{R_{\bar g}}{n-1} \left(\max_{M} u_- \right)^{\frac{n+2}{n-2}} = \frac{R_{\bar g}}{n-1}    \left(\frac{||g_-||_{C^0 (M, \bar g )}}{\sqrt{n}}\right)^{\frac{n+2}{4}}   \leq \frac{R_{\bar g}}{n-1} + \delta \Lambda_1 (M, \bar g),$$ where $||g_-||_{C^0 (M, \bar g)} = \sqrt{n}  \left(\max_{M} u_- \right)^{\frac{4}{n-2}}$. \\

The estimate for $n =2$ follows with similar calculations.\\

Now at $x_0$, we have
\begin{align*}
0 &\geq - \Delta_{\bar{g}} \varphi (x_0)\\
&= \frac{1}{\phi(x_0)} \left( - \Delta_{\bar{g}} v (x_0) + \frac{v(x_0)}{\phi(x_0)} \Delta_{\bar{g}} \phi(x_0) \right) + \frac{2}{\phi(x_0)} \nabla \varphi(x_0) \cdot \nabla \phi(x_0)\\
&= \frac{1}{\phi(x_0)} \left( - \Delta_{\bar{g}} v (x_0) - \left(\frac{R_{\bar g}}{n-1} + \Lambda_1(M_{\varepsilon}, \bar g) \right) v(x_0) \right)\\
&\geq \frac{1}{\phi(x_0)} \left( w(x_0) - \frac{R_{\bar g}}{n-1} - \Lambda_1 (M_\varepsilon, \bar g) \right)v (x_0)\\
&\geq  - \frac{1}{\phi(x_0)}\ \left( \Lambda_1(M_\varepsilon, \bar g) - \delta \Lambda_1(M, \bar g)  \right)\ v (x_0)\\
&> 0.
\end{align*}

Contradiction! Therefore, $$v(x) \geq 0, \ \ \ \forall x \in M.$$

On the other hand, the non-negativity of $v$ implies that  $$\partial_{\nu} v (x) < 0$$ for any $x \in \partial M$ by Generalized Hopf's lemma (cf. Theorem 7.3.3 in \cite{C-L}), unless $v$ is identically vanishing. Thus 
$$
\int_{\partial M} \partial_{\nu} v\ d\sigma_{\bar g} < 0,
$$
which contradicts to the fact
$$
\int_{\partial M} \partial_{\nu} v\ d\sigma_{\bar g} \geq 0,
$$
 hence $v \equiv 0$ on $M$.
\emph{i.e.} $$g \equiv \bar{g}.$$

\end{proof}

In order to prove Corollary \ref{cor:rig_small_domain}, we recall the following well-known fact on the estimate of the first eigenvalue of Laplacian.

\begin{lemma}(Karp-Pinsky, \cite{K-P}) \label{eigenvalue_rescaling}
For any $p \in M$, there exist positive constants $r_0$ and $c_0$ such that the first eigenvalue of Laplacian satisfies that
\begin{align*}
\lambda_1(B_r(p), \bar{g}) \geq \frac{c_0}{r^2}, 
\end{align*}
provided $r < r_0$.
\end{lemma}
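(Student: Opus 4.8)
The plan is to prove this by comparing Rayleigh quotients with the flat model, using that in geodesic normal coordinates centered at $p$ the metric $\bar g$ is uniformly close to the Euclidean metric on a sufficiently small ball.

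First I would fix geodesic normal coordinates $(x^1,\dots,x^n)$ centered at $p$, defined on $B_{r_0}(p)$ for some $r_0$ below the injectivity radius at $p$. Since $\bar g_{ij}(0) = \delta_{ij}$ and the components are continuous, after shrinking $r_0$ we may assume that on $B_{r_0}(p)$ the matrix $(\bar g_{ij})$ satisfies $\tfrac12 \delta_{ij} \le \bar g_{ij} \le 2\delta_{ij}$ as quadratic forms; the same two-sided bound then holds for the inverse matrix $(\bar g^{ij})$, and the volume density obeys $2^{-n/2} \le \sqrt{\det \bar g} \le 2^{n/2}$. We may also arrange that the coordinate image of $B_r(p)$ is contained in the Euclidean ball $\{|x| < 2r\}$ for every $r < r_0$.

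Next, for $0 < r < r_0$ and any $\varphi \in C_0^\infty(B_r(p))$, I would bound the Rayleigh quotient: from $|\nabla_{\bar g}\varphi|^2_{\bar g} = \bar g^{ij}\partial_i\varphi\,\partial_j\varphi \ge \tfrac12 |\partial\varphi|^2_{\mathrm{eucl}}$ and the density bounds,
$$\frac{\int_{B_r(p)} |\nabla_{\bar g}\varphi|^2\, dv_{\bar g}}{\int_{B_r(p)} \varphi^2\, dv_{\bar g}} \ \ge\ c_1\, \frac{\int |\partial\varphi|^2_{\mathrm{eucl}}\, dx}{\int \varphi^2\, dx}$$
with $c_1 = 2^{-n-1}$ a dimensional constant. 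Since $\varphi$ is supported in the coordinate image of $B_r(p) \subset \{|x| < 2r\}$, domain monotonicity of the Dirichlet eigenvalue gives that the right-hand quotient is at least $\lambda_1^{\mathrm{eucl}}(\{|x| < 2r\}) = \mu_n/(4r^2)$, where $\mu_n := \lambda_1^{\mathrm{eucl}}(B_1)$ is the first Dirichlet eigenvalue of the unit Euclidean ball. Taking the infimum over $\varphi$ yields $\lambda_1(B_r(p),\bar g) \ge c_0/r^2$ with $c_0 = c_1 \mu_n/4$, as claimed.

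There is no deep obstacle here; the only point demanding care is uniformity, namely choosing a single $r_0$ for which the metric comparison and the inclusion $B_r(p) \subset \{|x| < 2r\}$ hold simultaneously for all $r < r_0$. This is possible because each condition holds once $r_0$ drops below a threshold controlled by the local geometry near $p$, so $r_0$ and $c_0$ depend on $p$ (and can be chosen uniformly on compact sets). An alternative, essentially equivalent, route is a rescaling argument: pulling $\bar g$ back by the dilation $x \mapsto rx$ and dividing by $r^2$ produces metrics $g_r$ on the unit ball with $g_r \to \delta$ in $C^0$ as $r \to 0$; combined with the scaling identity $\lambda_1(B_r(p),\bar g) = r^{-2}\lambda_1(B_1(0),g_r)$ and continuity of the first Dirichlet eigenvalue under $C^0$-perturbations of the metric, this gives $\lambda_1(B_1(0),g_r) \ge \tfrac12\mu_n$ for small $r$, hence the lemma.
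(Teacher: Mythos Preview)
Your argument is correct. Note, however, that the paper does not supply its own proof of this lemma: it is stated as a known fact and attributed to Karp--Pinsky, and the paper immediately moves on to the proof of Corollary~\ref{cor:rig_small_domain}. So there is nothing to compare your approach against in the paper itself.

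Your proof is a clean, self-contained replacement for the citation. One small simplification: in geodesic normal coordinates centered at $p$, radial geodesics are straight lines and the distance from $p$ equals the Euclidean norm $|x|$, so the coordinate image of $B_r(p)$ is \emph{exactly} the Euclidean ball $\{|x|<r\}$ (not merely contained in $\{|x|<2r\}$). This lets you drop the factor of $4$ in the final constant and removes the need to ``arrange'' the inclusion separately. Otherwise the Rayleigh-quotient comparison and the alternative rescaling argument you sketch are both standard and correct; either yields constants $r_0,c_0$ depending on the local geometry near $p$, which is all the paper uses.
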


\begin{proof}[Proof of corollary \ref{cor:rig_small_domain}]
Take $$\Lambda:= \max_{x \in B_1(p)} \frac{R_{\bar g}(x)}{n-1}.$$ We can choose $0< r_0 < 1$ such that 
\begin{align*}
\lambda_1(B_{r_0}(p), \bar{g}) \geq \frac{c_0}{r_0^2} > \Lambda, 
\end{align*}
due to Lemma \ref{eigenvalue_rescaling}. Clearly, 
\begin{align*}
\Lambda_1(B_{r_0}(p), \bar g) \geq \ \lambda_1(B_{r_0}(p), \bar{g}) - \Lambda>\ 0.
\end{align*}

Now the conclusion follows by applying Theorem \ref{thm:rigidity} on the geodesic ball $B_{r_0}(p)$.
\end{proof}

We finish this section by showing Corollary \ref{cor:non_pos_scalar} is true.
\begin{proof}[Proof of corollary \ref{cor:non_pos_scalar}]
Note that if $R_{\bar g} \leq 0$, then 
$$\Lambda_1(\Omega, \bar g) > 0$$ for any compactly contained domain $\Omega \subset M$.  The conclusion follows automatically from Theorem \ref{thm:rigidity}.
\end{proof}

\section{Non-rigidity phenomena of compactly conformal deformations}

In this section, we will construct compactly conformal deformations which suggests that the vanishing of Brown-York mass does not imply the conformal rigidity for large domains, assuming the positivity of scalar curvature.\\

Let $\phi$ be a first Dirichlet eigenfunction of Laplacian on $\Omega$. In particular, we can choose $\phi > 0$ on $\Omega$. Note that, in fact $\partial_\nu \phi < 0$ on $\partial \Omega$ by Generalized Hopf's lemma (cf. Theorem 7.3.3 in \cite{C-L}). Following the idea of Lemma 20 and Proposition 21 in \cite{B-M}, we can construct metrics such that they satisfy the desired properties near $\partial \Omega$:

\begin{proposition}\label{prop:boundary_metric}
For any $\varepsilon > 0$ sufficiently small, there are smooth metrics $\tilde g_+$ and $\tilde g_-$ on $M$ such that 
\begin{align*}
(I):
\begin{cases}
R_{\tilde g_+} > R_{\bar g}  &\text{on $\Omega - \Omega_{2\varepsilon}$}\\
\tilde g_+ = \bar g &\text{on  $M - \Omega$}
\end{cases}
\end{align*}
and
\begin{align*}
(II):
\begin{cases}
R_{\tilde g_-} < R_{\bar g}  &\text{on $\Omega - \Omega_{2\varepsilon}$}\\
\tilde g_- = \bar g &\text{on $M - \Omega$},
\end{cases}
\end{align*}
where $\Omega_{2\varepsilon} := \{ x \in \Omega: \phi(x) > 2\varepsilon\}$.
\end{proposition}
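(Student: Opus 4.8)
The plan is to realise both $\tilde g_{+}$ and $\tilde g_{-}$ as small conformal deformations of $\bar g$ produced from a single fixed nonpositive ``boundary bump'' $f\in C^{\infty}(M)$ supported near $\partial\Omega$. For $n\ge 3$ I would set $\tilde g_{\pm}=(1\pm tf)^{\frac{4}{n-2}}\bar g$, and for $n=2$ set $\tilde g_{\pm}=e^{\pm 2tf}\bar g$, where $t>0$ is a small parameter and $f\le 0$ with $f\equiv 0$ on $M-\Omega$; then $\tilde g_{\pm}=\bar g$ on $M-\Omega$ is automatic, and if $\sup_{M}|f|<1$ the conformal factor is positive for $t<1$, so the $\tilde g_{\pm}$ are genuine smooth metrics. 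Starting from the conformal transformation formula for scalar curvature recorded above, a Taylor expansion in $t$ gives, on $\operatorname{supp} f$,
\begin{equation*}
R_{\tilde g_{\pm}}-R_{\bar g}=(1\pm tf)^{-\frac{n+2}{n-2}}\Bigl(\pm\,c_{n}\,t\,\mathscr{L}_{\bar g}f+E_{\pm}\Bigr),\qquad c_{n}:=\frac{4(n-1)}{n-2},\qquad |E_{\pm}|\le C\,t^{2}f^{2},
\end{equation*}
with $C$ depending only on $f$ and $\bar g$; the case $n=2$ is identical with $c_{2}:=2$ and $e^{\pm 2tf}$ in place of the power. The first thing I would check is precisely this shape of the expansion: the pieces that are linear in $f$ but are \emph{not} of the form $\Delta_{\bar g}f$ cancel exactly, because $\tfrac{c_{n}}{n-1}=\tfrac{n+2}{n-2}-1$; this cancellation is what makes the error $E_{\pm}$ genuinely quadratic in $f$ rather than merely $O(t^{2})$, and it will be essential near $\partial\Omega$. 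Since the conformal factor is positive, the sign of $R_{\tilde g_{\pm}}-R_{\bar g}$ is that of $\pm c_{n}t\,\mathscr{L}_{\bar g}f+E_{\pm}$, so it suffices to produce $f$ with $\mathscr{L}_{\bar g}f>0$ on a neighbourhood of $\partial\Omega$ containing $\Omega-\Omega_{2\varepsilon}$, large enough there to absorb $E_{\pm}$.

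To build $f$: since $\phi>0$ in $\Omega$, $\phi=0$ on $\partial\Omega$ and $\partial_{\nu}\phi<0$ on $\partial\Omega$ (generalized Hopf lemma), there is $\rho_{0}>0$ such that $|\nabla_{\bar g}\phi|\ge c_{0}>0$ on $\{0<\phi<\rho_{0}\}$. Choose $\chi\in C^{\infty}(\mathbb{R};[0,1])$ with $\chi\equiv 1$ on $(-\infty,\rho_{0}/2]$ and $\chi\equiv 0$ on $[\rho_{0},\infty)$, and set $f:=-\chi(\phi)\,e^{-1/\phi}$ on $\Omega$ and $f:=0$ on $M-\Omega$. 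Because $e^{-1/\phi}$ and all its $\phi$-derivatives vanish to infinite order as $\phi\to 0^{+}$, this $f$ extends to an element of $C^{\infty}(M)$ with $\operatorname{supp} f\subset\{x\in\overline{\Omega}:\phi(x)\le\rho_{0}\}$ and $-1<f\le 0$. On $\{0<\phi<\rho_{0}/2\}$, where $f=-e^{-1/\phi}$, the chain rule together with $\Delta_{\bar g}\phi=-\lambda_{1}(\Omega,\bar g)\,\phi$ yields
\begin{equation*}
\mathscr{L}_{\bar g}f=e^{-1/\phi}\left(\frac{(1-2\phi)\,|\nabla_{\bar g}\phi|^{2}}{\phi^{4}}-\frac{\lambda_{1}(\Omega,\bar g)}{\phi}+\frac{R_{\bar g}}{n-1}\right),
\end{equation*}
whose leading term as $\phi\to 0^{+}$ is $|\nabla_{\bar g}\phi|^{2}/\phi^{4}\ge c_{0}^{2}/\phi^{4}$; hence there are constants $c_{1},\rho_{1}>0$ with $\mathscr{L}_{\bar g}f\ge c_{1}\,e^{-1/\phi}/\phi^{4}>0$ on $\{0<\phi<\rho_{1}\}$.

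To finish, take any $\varepsilon>0$ with $2\varepsilon<\rho_{1}$. Then $\Omega-\Omega_{2\varepsilon}=\{x\in\Omega:0<\phi(x)\le 2\varepsilon\}\subset\{0<\phi<\rho_{1}\}$, and there $f=-e^{-1/\phi}$, so $f^{2}=e^{-2/\phi}\le(\phi^{4}/c_{1})\,e^{-1/\phi}\,\mathscr{L}_{\bar g}f$. Feeding this into $|E_{\pm}|\le Ct^{2}f^{2}$ gives, on $\Omega-\Omega_{2\varepsilon}$,
\begin{equation*}
\pm\bigl(R_{\tilde g_{\pm}}-R_{\bar g}\bigr)\ \ge\ (1\pm tf)^{-\frac{n+2}{n-2}}\,c_{n}\,t\,\mathscr{L}_{\bar g}f\left(1-\frac{C}{c_{1}c_{n}}\,\phi^{4}e^{-1/\phi}\,t\right)\ >\ 0,
\end{equation*}
provided $t$ is small enough that $\tfrac{C}{c_{1}c_{n}}\rho_{1}^{4}\,t<1$ (using $\phi^{4}e^{-1/\phi}\le\rho_{1}^{4}$ on this collar); this smallness of $t$ does not depend on $\varepsilon$. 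Hence $R_{\tilde g_{+}}>R_{\bar g}$ and $R_{\tilde g_{-}}<R_{\bar g}$ on $\Omega-\Omega_{2\varepsilon}$ while $\tilde g_{\pm}=\bar g$ on $M-\Omega$, which is the assertion, and in fact one single pair $\tilde g_{\pm}$ works for all sufficiently small $\varepsilon$; the case $n=2$ is identical with $e^{\pm 2tf}\bar g$ and $c_{2}=2$. The one genuinely delicate step is the behaviour as $x\to\partial\Omega$, where $\mathscr{L}_{\bar g}f$ itself degenerates to $0$, so that a crude $O(t^{2})$ bound on the nonlinearity would be useless; what rescues the estimate is the super-exponential flatness of $f$ at $\partial\Omega$ — which, combined with the exact cancellation that makes the error quadratic in $f$ — forces the linear term $c_{n}t\,\mathscr{L}_{\bar g}f$ to dominate the error $O(t^{2}e^{-2/\phi})$ uniformly all the way down to the boundary. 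Everything else is routine computation.
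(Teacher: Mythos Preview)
Your construction is essentially the paper's: conformal factors $1\pm e^{-1/\phi}$ near $\partial\Omega$, extended by $1$ outside $\Omega$. The paper, however, dispenses with the small parameter $t$, the cutoff $\chi$, and the whole Taylor--expansion error analysis. It simply sets $w_{\pm}=1\mp e^{-1/\phi}$ on $\Omega$ (extended by $1$), notes from the same chain--rule computation that $e^{-1/\phi}$ is \emph{subharmonic} on $\Omega-\Omega_{2\varepsilon}$, and then writes
\[
R_{\tilde g_{+}}=w_{+}^{-\frac{n+2}{n-2}}\Bigl(R_{\bar g}w_{+}-c_{n}\Delta_{\bar g}w_{+}\Bigr)\ \ge\ w_{+}^{-\frac{4}{n-2}}R_{\bar g}\ >\ R_{\bar g},
\]
using $\Delta_{\bar g}w_{+}\le 0$, $0<w_{+}<1$, and the standing hypothesis $R_{\bar g}>0$; the case of $w_{-}$ is symmetric. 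So the paper's proof is a two--line exact inequality rather than a perturbative estimate. What your route buys is robustness: because you package the linearisation as $\mathscr{L}_{\bar g}f$ and bound the remainder by $Ct^{2}f^{2}$, your argument only needs $R_{\bar g}$ bounded, not strictly positive, and the quadratic flatness of $f$ at $\partial\Omega$ does all the work. In the present context that extra generality is not needed, and the paper's algebra is considerably cleaner; but your proof is correct as written.
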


\begin{proof}
Since $\partial_\nu \phi \neq 0$ on $\partial \Omega$, we can find an $\varepsilon > 0$ such that 
\begin{align*}
\Delta_{\bar g} e^{-\frac{1}{\phi}} = e^{-\frac{1}{\phi}} \left( \frac{|\nabla \phi|^2 -\lambda_1(\Omega, \bar g)\phi ^3- 2 |\nabla \phi|^2 \phi }{\phi ^4}\right) \geq 0,
\end{align*}
\emph{i.e.} $e^{-\frac{1}{\phi }}$ is subharmonic on $\Omega - \Omega_{2\varepsilon}$. \\

For $n \geq 3$, we take $w_+ = 1 - e^{-\frac{1}{\phi }}$, $w_- = 1 + e^{ - \frac{1}{\phi }}$ on $\Omega$ and extend them to be constantly $1$ outside domain $\Omega$. Clearly, $w_+$ and $w_-$ are smooth on $M$. Now let $\tilde g_+ = w_+^{\frac{4}{n-2}} \bar g$ and $\tilde g_- = w_-^{\frac{4}{n-2}} \bar g$ respectively. Then we have 
\begin{align*}
R_{\tilde g_+} = w_+^{- \frac{n+2}{n-2}} \left( R_{\bar g}w_+ - \frac{4(n-1)}{n-2} \Delta_{\bar g} w_+\right) \geq w_+^{- \frac{4}{n-2}} R_{\bar g} > R_{\bar g}
\end{align*}
and 
\begin{align*}
R_{\tilde g_-} = w_-^{- \frac{n+2}{n-2}} \left( R_{\bar g}w_- - \frac{4(n-1)}{n-2} \Delta_{\bar g} w_-\right) \leq w_-^{- \frac{4}{n-2}} R_{\bar g} < R_{\bar g}
\end{align*}
on $\Omega - \Omega_{2\varepsilon}$.\\

Similar constructions work for $n = 2$, if we take $\tilde g_+ = w_+^2 \bar g$, $\tilde g_- = w_-^2 \bar g$ respectively.
\end{proof}

Now let $$\Omega_{\varepsilon} := \{ x \in \Omega: \phi(x) > \varepsilon\}$$ and choose $\varepsilon$ sufficiently small such that $$\Lambda_1(\Omega_\varepsilon, \bar g) < 0$$ and let $\psi$ be a positive first eigenfunction of $\mathscr{L}_{\bar g}$ on $\Omega_\varepsilon$ which vanishes on $\partial \Omega_\varepsilon$. We will produce perturbed metrics as follow:

\begin{proposition}
There are metrics $g^+_t, g^-_t \in [\bar g]$ with 
\begin{align*}
(I):
\begin{cases}
R_{g^+_t} \geq R_{\bar g}  &\text{on $\bar \Omega_{\varepsilon }$}\\
g^+_t = \bar g &\text{on $\partial \Omega_\varepsilon$}\\
H_{g^+_t} > H_{\bar g} &\text{on $\partial \Omega_\varepsilon$}
\end{cases}
\end{align*}
and
\begin{align*}
(II):
\begin{cases}
R_{g^-_t} \leq R_{\bar g}  &\text{on $\bar \Omega_{\varepsilon }$}\\
g^-_t = \bar g &\text{on $\partial \Omega_\varepsilon$}\\
H_{g^-_t} < H_{\bar g} &\text{on $\partial \Omega_\varepsilon$}.
\end{cases}
\end{align*}
\end{proposition}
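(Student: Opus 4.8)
The plan is to start from the metric $\tilde g_{\pm}$ produced in Proposition \ref{prop:boundary_metric}, which already achieves $R_{\tilde g_{\pm}} \gtrless R_{\bar g}$ on the collar $\Omega - \Omega_{2\varepsilon}$ and equals $\bar g$ outside $\Omega$, and to perturb it in the conformal direction of the first eigenfunction $\psi$ of $\mathscr{L}_{\bar g}$ on $\Omega_{\varepsilon}$ to fix the scalar curvature inequality on the inner region $\Omega_{2\varepsilon}$ as well. Concretely, for $n \geq 3$ write $\tilde g_{\pm} = w_{\pm}^{4/(n-2)}\bar g$ and set $g^{\pm}_t = (w_{\pm} \pm t\psi)^{4/(n-2)} \bar g$ (with the sign chosen so the perturbation helps, i.e. $+t\psi$ for the increasing case and $-t\psi$ for the decreasing case), where $t > 0$ is a small parameter; since $\psi$ vanishes on $\partial \Omega_{\varepsilon}$, we have $g^{\pm}_t = \tilde g_{\pm} = \bar g$ on $\partial \Omega_{\varepsilon}$, and $\psi > 0$ inside with $\partial_{\nu}\psi < 0$ on $\partial\Omega_{\varepsilon}$ by the generalized Hopf lemma, which will give the strict mean curvature inequality $H_{g^{\pm}_t} \gtrless H_{\bar g}$ on $\partial\Omega_{\varepsilon}$ exactly as in the Brown-York computation recalled earlier. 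For $n = 2$ one uses the exponential conformal factor and the corresponding curvature law, with the same structure.

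The key computation is the change of scalar curvature under the perturbation. Using the conformal transformation law, $R_{g^{\pm}_t}$ differs from $R_{\tilde g_{\pm}}$ by a term that, to first order in $t$, is governed by $\mathscr{L}_{\bar g}\psi = \Lambda_1(\Omega_{\varepsilon}, \bar g)\psi$ together with lower-order corrections coming from the fact that we are perturbing around $w_{\pm}$ rather than around the constant $1$. The point of choosing $\Omega_{\varepsilon}$ so that $\Lambda_1(\Omega_{\varepsilon}, \bar g) < 0$ is that on $\Omega_{2\varepsilon}$, where $w_{\pm}$ is already close to $1$ (since $e^{-1/\phi} \to 0$ as $\phi$ grows, so for $\varepsilon$ small $w_{\pm}$ is uniformly $C^2$-close to $1$ there) and $R_{\tilde g_{\pm}} = R_{\bar g}$ there, the dominant term in $R_{g^{\pm}_t} - R_{\bar g}$ has the favorable sign: schematically it looks like $\mp \tfrac{4(n-1)}{n-2} t \,\mathscr{L}_{\bar g}\psi + O(t^2) = \mp \tfrac{4(n-1)}{n-2} t\,\Lambda_1(\Omega_{\varepsilon},\bar g)\psi + O(t^2)$, which is $\geq 0$ in the $+$ case and $\leq 0$ in the $-$ case because $\Lambda_1 < 0$, and strictly so where $\psi > 0$. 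On the transition region $\Omega_{\varepsilon} - \Omega_{2\varepsilon}$ one does not have $R_{\tilde g_{\pm}} = R_{\bar g}$, but there $R_{\tilde g_{\pm}} - R_{\bar g}$ is strictly positive (resp. negative) and bounded below (resp. above) by a fixed positive constant from Proposition \ref{prop:boundary_metric}, so the $O(t)$ error introduced by the perturbation is absorbed for $t$ small.

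The main obstacle — and the place where care is needed — is precisely this matching/absorption argument across the three regions: one must choose the parameters in the right order. First fix $\varepsilon$ small enough that Proposition \ref{prop:boundary_metric} applies, that $w_{\pm}$ is $C^2$-close to $1$ on $\Omega_{2\varepsilon}$, and that $\Lambda_1(\Omega_{\varepsilon}, \bar g) < 0$; this last condition is consistent because $\Lambda_1$ is monotone in the domain and tends to $\Lambda_1(\Omega, \bar g) < 0$ as $\varepsilon \to 0$, and in fact by the earlier construction $\Omega_{\varepsilon} \nearrow \Omega$. Then, with $\varepsilon$ fixed, choose $t > 0$ small enough that: (a) $w_{\pm} \pm t\psi$ stays positive so $g^{\pm}_t$ is a genuine metric; (b) on $\Omega_{2\varepsilon}$ the leading negative-$\Lambda_1$ term dominates the $O(t^2)$ remainder; and (c) on $\Omega_{\varepsilon} - \Omega_{2\varepsilon}$ the $O(t)$ perturbation error is smaller than the fixed gap $|R_{\tilde g_{\pm}} - R_{\bar g}|$ guaranteed by Proposition \ref{prop:boundary_metric}. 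Outside $\Omega_{\varepsilon}$, $g^{\pm}_t$ simply continues as $\tilde g_{\pm}$, which agrees with $\bar g$ outside $\Omega$, so $\mathrm{supp}(g^{\pm}_t - \bar g) \subset \Omega$ as required once we reinterpret the construction as living on all of $M$; the strictness of the scalar curvature inequality inside $\Omega_{\varepsilon}$ follows from $\psi > 0$ there in the interior region and from the strict inequalities of Proposition \ref{prop:boundary_metric} in the collar. Finally, smoothness of $g^{\pm}_t$ is inherited from the smoothness of $w_{\pm}$ and $\psi$, noting that $\psi$ is smooth up to $\partial\Omega_{\varepsilon}$ and is extended by $0$, which glues smoothly to $\tilde g_{\pm}$ because near $\partial\Omega_{\varepsilon}$ both $\tilde g_{\pm}$ and $\bar g$ coincide to the order needed — this last gluing point deserves an explicit check and is the subtlest technical item.
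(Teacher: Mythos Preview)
Your proposal misidentifies what this proposition asks for and misremembers the output of Proposition~\ref{prop:boundary_metric}. The proposition requires $g^{\pm}_t = \bar g$ on $\partial\Omega_\varepsilon$, but your construction gives $g^{\pm}_t = \tilde g_{\pm}$ there, and $\tilde g_{\pm} \neq \bar g$ on $\partial\Omega_\varepsilon$: recall $w_{\pm} = 1 \mp e^{-1/\phi}$ on \emph{all} of $\Omega$, so on $\partial\Omega_\varepsilon = \{\phi = \varepsilon\}$ one has $w_{\pm} = 1 \mp e^{-1/\varepsilon} \neq 1$. Related to this, your claims that $w_{\pm}$ is uniformly $C^2$-close to $1$ and that $R_{\tilde g_{\pm}} = R_{\bar g}$ on $\Omega_{2\varepsilon}$ are both false: $e^{-1/\phi}$ is small only where $\phi$ is small, but on $\Omega_{2\varepsilon}$ the function $\phi$ ranges up to its fixed maximum, so $w_{\pm}$ is bounded away from $1$ in the deep interior independently of $\varepsilon$, and $R_{\tilde g_{\pm}}$ is not controlled there. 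Your sign convention is also inconsistent: perturbing by $+t\psi$ gives first variation $\tfrac{4(n-1)}{n-2}\,t\,\mathscr{L}_{\bar g}\psi = \tfrac{4(n-1)}{n-2}\,t\,\Lambda_1\psi < 0$, which \emph{decreases} scalar curvature, contrary to what you assert for the increasing case.

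The paper's proof is far simpler and does not touch $\tilde g_{\pm}$ at all: it perturbs $\bar g$ directly, taking $u^{\pm}_t = 1 \mp t\psi$ and $g^{\pm}_t = (u^{\pm}_t)^{4/(n-2)}\bar g$ on $\bar\Omega_\varepsilon$. Then $g^{\pm}_t = \bar g$ on $\partial\Omega_\varepsilon$ is immediate from $\psi|_{\partial\Omega_\varepsilon}=0$; the scalar curvature inequality follows at first order from $\mathscr{L}_{\bar g}\psi = \Lambda_1(\Omega_\varepsilon,\bar g)\psi$ with $\Lambda_1<0$; and $H_{g^{\pm}_t}\gtrless H_{\bar g}$ follows from $\partial_\nu\psi<0$ via Hopf. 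Matching with $\tilde g_{\pm}$ on $\partial\Omega_\varepsilon$ is deferred to the \emph{next} proposition (a constant rescaling), and the genuine merge across $\partial\Omega_\varepsilon$ requires the Brendle--Marques--Neves gluing theorem precisely because, as you yourself flag at the end, extending $\psi$ by zero is not smooth---indeed not even $C^1$, since $\partial_\nu\psi\neq 0$. Your attempt to collapse these three steps into one therefore cannot be completed.
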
 
 
\begin{proof} 
 For $n \geq 3$, let $g_t= u_t^{\frac{4}{n-2}} \bar g$, where $u_t = 1 + t  \varphi $, where $\varphi \in C^\infty (M)$ and supported in $\Omega$. Then 
\begin{align*}
R_{g_t} = 
& u_t^{-\frac{n+2}{n-2}} \left( R_{\bar{g}} u_t - \frac {4(n-1)}{n-2}\Delta_{\bar g} u_t \right)\\
=& R_{\bar g} - \frac{4(n-1)}{n-2}\left( \Delta_{\bar g} \varphi + \frac{R_{\bar g}}{n-1} \varphi \right)t + O(t^2)
\end{align*}
and 
\begin{align*}
 H_{g_t} = H_{\bar g} + t \frac {2(n-1)}{n-2}\partial_\nu \varphi.
\end{align*}

We take $$\varphi_+ := - \psi$$ for $(I)$ and $$\varphi_- := \psi $$ for $(II)$.\\

Then we have
$$\Delta_{\bar g} \varphi_+ + \frac{R_{\bar g}}{n-1} \varphi_+ = \mathscr{L}_{\bar g} \psi = \Lambda_1(\Omega_\varepsilon, \bar g) \psi  < 0$$ on $\Omega$ and $\varphi_+ = 0$, $\partial_\nu\varphi_+ > 0$ on $\partial \Omega_\varepsilon$.\\

Similarly, $$\Delta_{\bar g} \varphi_- + \frac{R_{\bar g}}{n-1} \varphi_- > 0$$ on $\Omega$ and $\varphi_- = 0$, $\partial_\nu\varphi_- < 0$ on $\partial \Omega_\varepsilon$.\\

Let $$g_t^+ = (u^+_t)^{\frac{4}{n-2}} \bar g$$ and $$g_t^- = (u^-_t)^{\frac{4}{n-2}} \bar g,$$ where $u^+_t = 1 + t  \varphi_+ $ and $u^-_t = 1 + t  \varphi_- $ respectively. Then they are our desired metrics, if we choose $t$ sufficiently small.\\

For $n=2$, we take $g_t^+ = (u^+_t)^2 \bar g$ and $g_t^- = (u^-_t)^2 \bar g$. By similar calculation, we can see they satisfy $(I)$ and $(II)$ respectively.
\end{proof}

In order to glue metrics we derived previously, we need to match them at zero's order first.

\begin{proposition}\label{prop:perturb_metric}
There are metrics $\hat g^+_t, \hat g^-_t \in [\bar g]$ with 
\begin{align*}
(I'):
\begin{cases}
R_{\hat g^+_t} > R_{\bar g}  &\text{on $\bar \Omega_{\varepsilon }$}\\
\hat g^+_t = \tilde g_+ &\text{on $\partial \Omega_\varepsilon$}\\
H_{\hat g^+_t} > H_{\bar g} &\text{on $\partial \Omega_\varepsilon$}
\end{cases}
\end{align*}
and
\begin{align*}
(II'):
\begin{cases}
R_{\hat g^-_t} < R_{\bar g}  &\text{on $\bar \Omega_{\varepsilon }$}\\
\hat g^-_t = \tilde g_- &\text{on $\partial \Omega_\varepsilon$}\\
H_{\hat g^-_t} < H_{\bar g} &\text{on $\partial \Omega_\varepsilon$}.
\end{cases}
\end{align*}

\end{proposition}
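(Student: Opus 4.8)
The idea is to compose the two constructions already in hand: the metrics $\tilde g_\pm$ of Proposition \ref{prop:boundary_metric}, which agree with $\bar g$ outside $\Omega$ and have strictly monotone scalar curvature on the collar $\Omega - \Omega_{2\varepsilon}$, and the perturbed metrics $g_t^\pm$ of the preceding proposition, which on $\bar\Omega_\varepsilon$ have (weakly) monotone scalar curvature and the correct strict mean-curvature inequality on $\partial\Omega_\varepsilon$. The conformal factor of $\tilde g_\pm$ restricted to $\partial\Omega_\varepsilon$ is some positive function $\eta_\pm := 1 \mp e^{-1/\varepsilon}$ (a constant on $\partial\Omega_\varepsilon$, since $\phi \equiv \varepsilon$ there), while $g_t^\pm = \bar g$ on $\partial\Omega_\varepsilon$; so to glue we must first rescale the interior perturbation so its boundary value matches $\eta_\pm \bar g$. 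Concretely, I would set $\hat g_t^\pm := (\eta_\pm)\, (u_t^\pm)^{4/(n-2)} \bar g$ for $n \geq 3$ (and $(\eta_\pm)(u_t^\pm)^2 \bar g$ for $n=2$), i.e. multiply the conformal factor $u_t^\pm = 1 + t\varphi_\pm$ by the appropriate constant raised to the power $(n-2)/4$ so that $\hat g_t^\pm = \tilde g_\pm$ on $\partial\Omega_\varepsilon$.

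First I would record that multiplying a metric by a positive constant $c$ scales scalar curvature by $c^{-1}$ and leaves the sign of $R_{\hat g} - R_{\bar g}$ unchanged only if we also rescale $R_{\bar g}$; so the cleaner route is to keep $t$ as a free small parameter and expand: $R_{\hat g_t^\pm} = R_{\bar g} \mp \frac{4(n-1)}{n-2}\,\eta_\pm^{-4/(n-2)}\bigl(\Delta_{\bar g}\varphi_\pm + \tfrac{R_{\bar g}}{n-1}\varphi_\pm\bigr) t + O(t^2)$ on $\Omega_\varepsilon$, where the first-order term is $\mp \eta_\pm^{-4/(n-2)} \Lambda_1(\Omega_\varepsilon,\bar g)\,\psi \cdot t$ times a positive constant. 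Since $\Lambda_1(\Omega_\varepsilon,\bar g) < 0$ and $\psi > 0$ on the interior, this first-order term has the correct strict sign ($>0$ in case $(I')$, $<0$ in case $(II')$) on $\Omega_\varepsilon$, and choosing $t > 0$ small enough makes the $O(t^2)$ remainder negligible, giving the strict scalar-curvature inequality on $\bar\Omega_\varepsilon$ after checking the boundary $\partial\Omega_\varepsilon$ separately (there $\psi = 0$ but $\partial_\nu\varphi_\pm$ has the right sign, so one uses a Taylor expansion near the boundary, or simply notes $\bar\Omega_\varepsilon$ should be replaced by a slightly smaller closed set and the collar handled by $\tilde g_\pm$). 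For the mean curvature, $H_{\hat g_t^\pm} = H_{\tilde g_\pm} + t\,\frac{2(n-1)}{n-2}\,\eta_\pm^{-2/(n-2)}\partial_\nu\varphi_\pm + O(t^2)$ along $\partial\Omega_\varepsilon$; but here one must be careful that $H_{\tilde g_\pm}$ on $\partial\Omega_\varepsilon$ is computed with respect to $\tilde g_\pm$, and since $\tilde g_\pm = \eta_\pm \bar g$ is a constant rescaling near $\partial\Omega_\varepsilon$ (as $\phi$ is constant there, $e^{-1/\phi}$ is locally constant to first order only if $\partial_\nu\phi$... — actually $\phi$ is not locally constant, so $\tilde g_\pm$ genuinely varies), I would instead track $H_{\tilde g_\pm} - H_{\bar g}$ using the conformal transformation law for $H$ applied to the factor $1 \mp e^{-1/\phi}$, whose normal derivative at $\partial\Omega_\varepsilon$ is $\mp e^{-1/\varepsilon}\varepsilon^{-2}\partial_\nu\phi$, and then add the $t$-perturbation.

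The main obstacle I anticipate is the bookkeeping at $\partial\Omega_\varepsilon$: one needs the strict mean-curvature inequality $H_{\hat g_t^+} > H_{\bar g}$ (resp. $<$) to hold there while simultaneously $\hat g_t^+ = \tilde g_+$ exactly (zeroth order) on $\partial\Omega_\varepsilon$, so the normal-derivative contributions from the two pieces — the collar factor $e^{-1/\phi}$ and the eigenfunction perturbation $t\psi$ — must be combined with the correct signs. Since $\partial_\nu\phi < 0$ on $\partial\Omega_\varepsilon$ (Hopf) and $\partial_\nu\psi$ has a definite sign on $\partial\Omega_\varepsilon$, both contributions can be arranged to push $H$ in the desired direction; the point is simply that the $t$-term dominates any wrong-signed piece for $t$ small, or better, that both terms already have the right sign so no domination is needed. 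I would also double-check smoothness: $e^{-1/\phi}$ extends smoothly by $0$ across $\phi = 0$, but here we only work on $\Omega_\varepsilon$ where $\phi \geq \varepsilon > 0$, so $\hat g_t^\pm$ is manifestly smooth, and the only remaining content is the sign verification above. Once all signs are pinned down, choosing $\varepsilon$ (hence $\Omega_\varepsilon$ with $\Lambda_1(\Omega_\varepsilon,\bar g) < 0$, available by continuity of $\Lambda_1$ in the domain since $\Lambda_1(\Omega,\bar g) < 0$) and then $t$ small completes the proof.
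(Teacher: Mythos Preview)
Your construction is exactly the paper's: rescale $g_t^\pm$ by the constant $\eta_\pm^{4/(n-2)}$ with $\eta_\pm = 1\mp e^{-1/\varepsilon}$ so that the boundary values match $\tilde g_\pm$ on $\partial\Omega_\varepsilon$. What you are missing is the one-line observation that makes the scalar-curvature step trivial and removes the boundary difficulty you flagged. Since $\eta_\pm$ is a \emph{constant}, the scalar curvature simply rescales:
\[
R_{\hat g_t^+} \;=\; \eta_+^{-4/(n-2)}\, R_{g_t^+},\qquad
R_{\hat g_t^-} \;=\; \eta_-^{-4/(n-2)}\, R_{g_t^-}.
\]
Because $R_{\bar g}>0$ (we are in the positive-scalar-curvature section) and $\eta_+<1<\eta_-$, this yields
\[
R_{\hat g_t^+} \;>\; R_{g_t^+}\;\ge\; R_{\bar g},\qquad
R_{\hat g_t^-} \;<\; R_{g_t^-}\;\le\; R_{\bar g}
\]
on all of $\bar\Omega_\varepsilon$, boundary included. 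No first-order expansion in $t$ is needed, and there is no ``$\psi=0$ on $\partial\Omega_\varepsilon$'' issue to work around. Your expansion in $t$ is in fact miscomputed: at $t=0$ one has $\hat g_0^+ = \eta_+^{4/(n-2)}\bar g$, so the zeroth-order term of $R_{\hat g_t^+}$ is $\eta_+^{-4/(n-2)}R_{\bar g}$, not $R_{\bar g}$; once you correct this you see immediately that the zeroth-order term already dominates $R_{\bar g}$.

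For the mean curvature, your displayed identity $H_{\hat g_t^\pm}=H_{\tilde g_\pm}+\dots$ is not right either, since $\hat g_t^\pm$ and $\tilde g_\pm$ have different conformal factors (one involves $\psi$, the other $e^{-1/\phi}$) and only agree at zeroth order on $\partial\Omega_\varepsilon$. The paper computes directly
\[
H_{\hat g_t^+}=\eta_+^{-2/(n-2)}\Bigl(H_{\bar g}+\tfrac{2(n-1)}{n-2}\,t\,(-\partial_\nu\psi)\Bigr)
\]
and compares this to $H_{\tilde g_+}=\eta_+^{-2/(n-2)}\bigl(H_{\bar g}+\tfrac{2(n-1)}{n-2}\,\tfrac{e^{-1/\varepsilon}}{\varepsilon^2\eta_+}(-\partial_\nu\phi)\bigr)$; the inequality $H_{\hat g_t^+}>H_{\tilde g_+}$ (which is what is actually used in the gluing Theorem~\ref{thm:gluing}) then follows because $e^{-1/\varepsilon}/\varepsilon^2\to 0$ as $\varepsilon\to 0$ while $t(-\partial_\nu\psi)$ stays bounded below by a positive constant. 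So with $t$ already fixed from the previous proposition, one shrinks $\varepsilon$; no balancing of ``wrong-signed pieces'' is required.
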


\begin{proof}
Let $\hat g^+_t := (1 - e^{-\frac{1}{\varepsilon}})^{\frac{4}{n-2}} g^+_t$ and $\hat g^-_t := (1 + e^{-\frac{1}{\varepsilon}})^{\frac{4}{n-2}} g^-_t$. Then clearly, $\hat g^+_t = \tilde g_+$ and $\hat g^-_t = \tilde g_-$ on $\partial \Omega_\varepsilon$ with 
$$
R_{\hat g^+_t} = (1 - e^{-\frac{1}{\varepsilon}})^{ - \frac{4}{n-2}} R_{g^+_t} > R_{g^+_t} \geq R_{\bar g}
$$
and 
$$
R_{\hat g^-_t} = (1 + e^{-\frac{1}{\varepsilon}})^{ - \frac{4}{n-2}} R_{g^-_t} < R_{g^-_t} \leq R_{\bar g}
$$
on $\bar \Omega_\varepsilon$.

As for mean curvatures,
\begin{align*}
H_{\hat g^+_t} &= (1 - e^{-\frac{1}{\varepsilon}})^{ - \frac{2}{n-2}} \left( H_{\bar g} + \frac{2(n-1)}{n-2}  t (- \partial_\nu \psi) \right) \\
&> (1 - e^{-\frac{1}{\varepsilon}})^{ - \frac{2}{n-2}} \left( H_{\bar g} + \frac{2(n-1)}{n-2} \cdot \frac{ e^{-\frac{1}{\varepsilon}}}{\varepsilon^2 (1 - e^{-\frac{1}{\varepsilon}})} (- \partial_\nu \phi) \right)\\
&= H_{\tilde g_+}
\end{align*}
and
\begin{align*}
H_{\hat g^-_t} &=  (1 + e^{-\frac{1}{\varepsilon}})^{ - \frac{2}{n-2}} \left( H_{\bar g} - \frac{2(n-1)}{n-2}  t (- \partial_\nu \psi) \right) \\
&< (1 + e^{-\frac{1}{\varepsilon}})^{ - \frac{2}{n-2}} \left( H_{\bar g} - \frac{2(n-1)}{n-2} \cdot \frac{ e^{-\frac{1}{\varepsilon}}}{\varepsilon^2 (1 + e^{-\frac{1}{\varepsilon}})} (- \partial_\nu \phi) \right)\\
&= H_{\tilde g_-}
\end{align*}
on $\partial \Omega_\varepsilon$, if we choose $\varepsilon > 0$ sufficiently small.

\end{proof}

The following crucial gluing theorem (part $I$) was originally proved in \cite{B-M-N}. We observed that it holds within conformal classes and also a similar statement (part $II$) holds by minor modifications on the original proof.

\begin{theorem}[Brendle-Marques-Neves \cite{B-M-N}]\label{thm:gluing}
Let $(M, g)$ be a Riemannian manifold with boundary $\partial M$. Suppose $\tilde g \in [g]$ is another metric on $M$ with the same induced metric on $\partial M$.

Then for any $\delta > 0$ and any neighborhood $K $ of $\partial M$ the following two statements hold.

$(I)$: If $H_g\geq H_{\tilde g}$, then there exists a metric $\hat g \in [ g]$ such that 
\begin{align*}
\begin{cases}
\hat g = g &\text{on $\partial M - K$}\\
\hat g = \tilde g &\text{in a neighborhood of $\partial M$}\\
R_{\hat g} \geq \min_{x \in M} \{R_{g} (x),  R_{\tilde g}(x)\} - \delta &\text{on $ M $}
\end{cases}
\end{align*}
and

$(II)$: If $H_g\leq H_{\tilde g}$, then there exists a metric $\hat g \in [ g]$ such that 
\begin{align*}
\begin{cases}
\hat g = g &\text{on $\partial M - K$}\\
\hat g = \tilde g &\text{in a neighborhood of $\partial M$}\\
R_{\hat g} \leq \max_{x \in M} \{R_{g} (x),  R_{\tilde g}(x)\} + \delta &\text{on $ M $.}
\end{cases}
\end{align*}\\

\end{theorem}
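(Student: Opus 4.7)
The plan is to revisit the construction of Brendle-Marques-Neves in \cite{B-M-N}, checking first that their entire interpolation stays within the conformal class $[g]$, and then observing that their argument is robust under the sign reversal required for part (II). Since $g$ and $\tilde g$ lie in the same conformal class with identical induced boundary metrics, I would write $\tilde g = v^{4/(n-2)} g$ for $n \geq 3$ (resp.\ $\tilde g = e^{2v} g$ for $n = 2$) with $v \equiv 1$ (resp.\ $v \equiv 0$) on $\partial M$. The conformal transformation law for mean curvature then turns the assumption $H_g \geq H_{\tilde g}$ in case (I) into $\partial_\nu v \leq 0$ on $\partial M$, and $H_g \leq H_{\tilde g}$ in case (II) into $\partial_\nu v \geq 0$.

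Next, in Fermi coordinates $(x, t)$ relative to $g$ near $\partial M$, I would interpolate between $v$ close to $\partial M$ and the constant $1$ away from $\partial M$ using the precise BMN profile inside a thin collar of width $\sigma > 0$ contained in $K$, producing a conformal factor $\hat v$. The metric $\hat g := \hat v^{4/(n-2)} g$ is then automatically in $[g]$ and coincides with $\tilde g$ near $\partial M$ and with $g$ outside the collar. Its scalar curvature is
\begin{equation*}
R_{\hat g} = \hat v^{-(n+2)/(n-2)} \Bigl( R_g \hat v - \tfrac{4(n-1)}{n-2} \Delta_g \hat v \Bigr),
\end{equation*}
and a naive cutoff would create a $\sigma^{-1}$-spike in $\partial_t^2 \hat v$; the clever design of the BMN profile uses the sign of $\partial_\nu v$ to push this spike in the favorable direction, yielding the lower bound $R_{\hat g} \geq \min\{R_g, R_{\tilde g}\} - \delta$ in case (I) once $\sigma$ is taken small enough depending on $\delta$, $v$, and $K$.

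For part (II), I would use the mirror interpolation profile, whose convexity is reversed so that, paired with the opposite sign $\partial_\nu v \geq 0$, the $\sigma^{-1}$-spike in $\partial_t^2 \hat v$ again tilts the favorable way, this time pushing $R_{\hat g}$ upward by only a controlled amount and giving $R_{\hat g} \leq \max\{R_g, R_{\tilde g}\} + \delta$. The main obstacle will be performing a careful sign audit of every inequality in the BMN scalar curvature estimate: maximum principles, convexity arguments, and absorption of lower-order error terms must all reverse cleanly. Since that estimate ultimately exploits only the sign of $\partial_\nu v$ together with the conformal transformation law, and since the scalar curvature formula behaves symmetrically under the mirror substitution up to signs, the reversal should go through with only bookkeeping modifications, completing the proof of both parts simultaneously.
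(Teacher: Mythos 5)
Your proposal is essentially the same approach the paper takes: the paper gives no proof here but cites \cite{B-M-N} and asserts that both the restriction to the conformal class and the sign-reversed part $(II)$ follow by ``minor modifications on the original proof.'' Your sketch --- writing $\tilde g = v^{4/(n-2)}g$ with $v\equiv 1$ on $\partial M$, converting $H_g\gtrless H_{\tilde g}$ into $\partial_\nu v\lessgtr 0$, interpolating the conformal factor across a thin collar, and tracking the sign of the $\sigma^{-1}$-scale spike in $\partial_t^2\hat v$ inside the scalar curvature formula --- is exactly what those modifications amount to, and correctly recognizes that because BMN interpolate the full metric tensor, keeping $\hat g\in[g]$ requires redoing the interpolation on the conformal factor itself rather than invoking their construction verbatim. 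One small inaccuracy: you do not need a ``mirror interpolation profile, whose convexity is reversed'' for part $(II)$; the same mollifier serves both parts, because the sign of the dominant spike in $\partial_t^2\hat v$ is dictated by the sign of $\partial_\nu v$ (equivalently of $H_g-H_{\tilde g}$) and not by any convexity designed into the profile, so in case $(I)$ the spike only helps the lower bound on $R_{\hat g}$ and in case $(II)$ it only helps the upper bound, leaving in both cases the same task of absorbing tangential and lower-order collar terms into the $\delta$-slack by taking $\sigma$ small --- precisely the bookkeeping that neither your sketch nor the paper carries out explicitly.
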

 
 Now we can prove the main theorem in this section.
 
\begin{proof}[Proof of Theorem \ref{thm:deformation}]
Applying Theorem \ref{thm:gluing}, for any $\delta > 0$, we can glue metrics $\tilde g_+$, $\tilde g_-$ from Proposition \ref{prop:boundary_metric} and $\hat g_t^+$, $\hat g_t^-$ from Proposition \ref{prop:perturb_metric} along $\partial\Omega_\varepsilon$ to get metrics $g_\delta^+$ and $g_\delta^-$  respectively, such that
$$R_{g_\delta^+} \geq \min_{x \in \Omega_\varepsilon} \{R_{\hat g_t^+} (x),  R_{\tilde g_+}(x)\} - \delta$$ 
and
$$R_{g_\delta^-} \leq \max_{x \in \Omega_\varepsilon} \{R_{\hat g_t^-} (x),  R_{\tilde g_-}(x)\} + \delta$$ 
on $\Omega_\varepsilon$. In particular, we get 
$$R_{g_\delta^+} > R_{\bar g}$$
and 
$$R_{g_\delta^- }< R_{\bar g}$$
inside $\Omega_\varepsilon$, if we choose $\delta$ sufficiently small.\\

Now we take 
\begin{align*}
g_+=
\begin{cases}
g_\delta^+ &\text{on $\Omega_\varepsilon$}\\
\tilde g_+ &\text{on $M - \Omega_\varepsilon$}
\end{cases}
\end{align*}
and
\begin{align*}
g_-=
\begin{cases}
g_\delta^- &\text{on $\Omega_\varepsilon$}\\
\tilde g_- &\text{on $M - \Omega_\varepsilon$.}
\end{cases}
\end{align*}

Clearly, $g_+$ and $g_-$ are smooth metrics which satisfy all requirement in the statement of Theorem \ref{thm:deformation}.

\end{proof}

Corollary \ref{cor:non_rig_closed} holds automatically, if we can justify the existence of a compact domain $\Omega$ with 
$$
\Lambda_1(\Omega, \bar g) < 0.
$$
 In fact, for closed manifolds this can be achieved with the aid of the following lemma:

\begin{lemma}\label{lem:reg_domain}
Let $(M^n, \bar g)$ be a closed Riemannian manifold with scalar curvature $R_{\bar g} > 0$. Then for any $\varepsilon > 0$, there exists a smooth domain $\Omega$ such that $\overline{\Omega} \neq M$ and its first Dirichlet eigenvalue of Laplacian satisfies $$\lambda_1(\Omega, \bar g) < \varepsilon.$$ In particular, we can take $\Omega$ such that $$\lambda_1(\Omega, \bar g) < \Lambda,$$ where $\Lambda : = \min_{x \in M} \frac{R_{\bar g}(x)}{n-1} > 0$. 
\end{lemma}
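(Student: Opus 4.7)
The plan is to take $\Omega$ to be the complement of a small geodesic ball, $\Omega_r := M\setminus \overline{B_r(p)}$, for a fixed point $p\in M$, and to show that $\lambda_1(\Omega_r, \bar g) \to 0$ as $r\to 0^+$. This is a classical capacity-type argument built on the fact that a single point has zero capacity in every dimension $n\geq 2$. Once established, choosing $r$ sufficiently small gives $\lambda_1(\Omega_r,\bar g)<\varepsilon$, and in particular we can make it smaller than $\Lambda = \min_M R_{\bar g}/(n-1)>0$. Note $\Omega_r$ has smooth boundary whenever $r$ is less than the injectivity radius at $p$, and $\overline{\Omega_r} = M\setminus B_r(p) \neq M$.

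The main step is an explicit test-function construction plugged into the Rayleigh quotient
\[
\lambda_1(\Omega_r, \bar g) \leq \frac{\int_M |\nabla \phi_r|^2_{\bar g}\, dv_{\bar g}}{\int_M \phi_r^2\, dv_{\bar g}}.
\]
Fix $\rho>0$ smaller than the injectivity radius at $p$, and for $0<r<\rho$ define $\phi_r\in C^{0,1}(M)$ by $\phi_r\equiv 0$ on $\overline{B_r(p)}$, $\phi_r\equiv 1$ on $M\setminus B_\rho(p)$, and on the annular region $B_\rho(p)\setminus \overline{B_r(p)}$ take $\phi_r$ to be the radial model function (with $s = d_{\bar g}(\cdot,p)$)
\[
\phi_r(s) = \begin{cases} \dfrac{s^{2-n}-\rho^{2-n}}{r^{2-n}-\rho^{2-n}}, & n\geq 3,\\[1ex] \dfrac{\log(\rho/s)}{\log(\rho/r)}, & n=2. \end{cases}
\]
In normal coordinates around $p$, the metric $\bar g$ is uniformly comparable to the Euclidean metric on $B_\rho(p)$, so a direct computation bounds the Dirichlet energy by a constant multiple of the Euclidean capacity of the annulus:
\[
\int_M |\nabla \phi_r|^2_{\bar g}\, dv_{\bar g} \leq C\cdot \begin{cases} (r^{2-n}-\rho^{2-n})^{-1}, & n\geq 3,\\[1ex] (\log(\rho/r))^{-1}, & n=2, \end{cases}
\]
and in both cases this tends to $0$ as $r\to 0^+$. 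Meanwhile $\int_M \phi_r^2\, dv_{\bar g}\geq \mathrm{Vol}_{\bar g}(M\setminus B_\rho(p))>0$ uniformly in $r$. Hence the Rayleigh quotient tends to zero and we conclude $\lambda_1(\Omega_r,\bar g)\to 0$.

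The one mild obstacle is justifying the energy estimate uniformly in $r$ in the two geometrically distinct regimes $n\geq 3$ and $n=2$; the logarithmic decay in dimension two is slower but still vanishing. A secondary technical point is ensuring $\phi_r$ can be smoothed (e.g., by mollifying near the spheres $s=r$ and $s=\rho$) without spoiling these estimates, so as to apply the variational characterization of $\lambda_1$; but since the Rayleigh quotient is continuous in $W^{1,2}_0(\Omega_r)$ and $\phi_r\in W^{1,2}_0(\Omega_r)$ already, this is immediate and no further regularization is necessary. The positivity assumption $R_{\bar g}>0$ only enters at the very end, guaranteeing $\Lambda>0$ so that the concluding inequality $\lambda_1(\Omega_r,\bar g)<\Lambda$ is nontrivial.
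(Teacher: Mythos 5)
Your proof is correct, and it takes a genuinely different route from the paper, most visibly in dimension two. For $n \geq 3$ the paper also uses a cutoff in the Rayleigh quotient, but with the simpler rescaled cutoff ($\varphi$ vanishing on $B_r(p)$, equal to $1$ outside $B_{2r}(p)$, and $|\nabla\varphi| \le 2/r$ on the shrinking annulus), which gives $\lambda_1(\Omega_r) \lesssim r^{n-2}/\mathrm{Vol}(M\setminus B_{2r})$. That estimate vanishes only for $n\geq 3$, so the paper must treat $n=2$ separately: it classifies closed surfaces with $R>0$ (orientable $\Rightarrow S^2$, non-orientable $\Rightarrow \mathbb{R}P^2$), invokes the uniformization to write $\bar g$ as conformal to the round metric, uses conformal invariance of the $2$-dimensional Dirichlet energy, and then quotes the known convergence $\lambda_1(\Omega_r,g_{S^2})\to 0$ from Chavel. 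Your equilibrium-potential test function (the annular harmonic profile with fixed outer radius $\rho$ and inner radius $r\to 0$) computes the capacity of a shrinking ball directly and yields $\int|\nabla\phi_r|^2 \lesssim (r^{2-n}-\rho^{2-n})^{-1}$ for $n\geq 3$ and $\lesssim (\log(\rho/r))^{-1}$ for $n=2$, both tending to zero; this handles all dimensions $n\geq 2$ in one stroke with no appeal to surface classification or uniformization. The tradeoff is that your route requires the slightly finer observation that points have zero $2$-capacity (the logarithmic potential), whereas the paper's $n\geq 3$ estimate is more naive; your unified argument is arguably cleaner and certainly shorter in aggregate. One small point worth making explicit if you formalize this: you should note that $\phi_r$ is Lipschitz on $M$ and vanishes on a neighborhood of $\partial\Omega_r = \partial B_r(p)$ only in the trace sense, so you are using that Lipschitz functions vanishing on $\partial\Omega_r$ lie in $W^{1,2}_0(\Omega_r)$ — which is standard and suffices for the variational characterization, as you already remarked.
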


\begin{proof}

For any domain $\Omega \varsubsetneqq M$, we have
\begin{align*}
\lambda_1 (\Omega, \bar g) = \inf\{\frac{\int_\Omega |\nabla \varphi|^2 dv_{\bar g}}{\int_\Omega \varphi^2 dv_{\bar g} }:  \varphi|_{\partial \Omega} \equiv 0\}.
\end{align*}

For any $p \in M$ and any $r > 0$, let $B_r(p)$ and $B_{2r}(p)$ be geodesic balls around $p$ with radii $r$ and $2r$ respectively. \\

We take $\Omega_r : = M - B_r(p)$ and $0 \leq \varphi \leq 1$ a smooth test function which satisfies that 
\begin{align*}
\begin{cases}
\varphi = 1 &\text{on $M - B_{2r}(p)$}\\
\varphi = 0 & \text{on $B_r(p)$}\\
|\nabla \varphi| \leq \frac{2}{r} &\text{on $B_{2r} (p) - B_r(p)$}.
\end{cases}
\end{align*}

Then $\varphi$ is supported in $\Omega_r$ and we have $$\frac{\int_{\Omega_r} |\nabla \varphi|^2 dv_{\bar g}}{\int_{\Omega_r} \varphi^2 dv_{\bar g} } \leq \frac{4}{r^2} \cdot \frac{ Vol_{\bar g}(B_{2r}(p))}{ Vol_{\bar g} (M - B_{2r}(p)) } \leq \frac{ c_0 r^{n-2}}{ Vol_{\bar g} (M - B_{2r}(p)) }$$ for $r$ sufficiently small, where $c_0$ is a constant depends only on $n$.

Thus for $n \geq 3$, $$\lambda_1(\Omega_r, \bar g) \leq \frac{\int_{\Omega_r} |\nabla \varphi|^2 dv_{\bar g}}{\int_{\Omega_r} \varphi^2 dv_{\bar g} } \leq \frac{ c_0 r^{n-2}}{ Vol_{\bar g} (M - B_{2r}(p)) } \rightarrow 0,$$ as $r \rightarrow 0$ and hence we can find some $r_0 > 0$ such that for any $\varepsilon > 0$ we have $$\lambda_1(\Omega_{r_0}, \bar g) < \varepsilon.$$

As for $n=2$, if $M$ is orientable, then $M$ is diffeomorphic to the standard sphere $S^2$ since $R_{\bar g} > 0$. And there is a smooth function $u$ such that $e^{2u}\bar g = g_{S^2}$, which is the canonical spherical metric on $S^2$.

Let $q\in S^2$ be the south pole and $B_r(q)$ a geodesic ball with respect to $g_{S^2}$ centered at $q$ with radius $0< r < \pi$. Take $\Omega_r : = S^2 - B_r(q)$, then for any smooth function $\varphi \in C_0^{\infty} (\Omega_r)$, $$\lambda_1(\Omega_r, \bar g) \leq \frac{\int_{\Omega_r} |\nabla \varphi|^2 dvol_{\bar g}}{\int_{\Omega_r} \varphi^2 dvol_{\bar g}} = \frac{\int_{\Omega_r} |\nabla \varphi|^2 dvol_{g_{S^2}}}{\int_{\Omega_r} \varphi^2 e^{2u} dvol_{g_{S^2}}} \leq c_1\cdot \frac{\int_{\Omega_r} |\nabla \varphi|^2 dvol_{g_{S^2}}}{\int_{\Omega_r} \varphi^2  dvol_{g_{S^2}}},$$ where $c_1 := e^{-2 \min_M u}$ is independent of $r$. Therefore, $$\lambda_1(\Omega_r, \bar g) \leq c_1 \lambda_1(\Omega_r, g_{S^2}) \rightarrow 0,$$ as $r \rightarrow 0$ (cf. Theorem 6, P. 50 in \cite{Chavel}). Hence for any $\varepsilon > 0$, we can find an $r_0 > 0$ such that $$\lambda_1(\Omega_{r_0}, \bar g) < \varepsilon.$$

Suppose $M$ is not orientable, then $M$ is diffeomorphic to $\mathbb{R}P^2$, the real projective plane whose double covering $S^2$. Let $p$, $q$ be the north and south pole of $S^2$ and consider the domain $\tilde\Omega_r = S^2 - \left( B_r(p) \cup B_r(q)\right)$, $0 < r < \frac{\pi}{2}$ on $S^2$. For any $\varepsilon > 0$, take $r_0 > 0$ sufficiently small such that its quotient $\Omega_{r_0} \subset \mathbb{R}P^2$ is a domain. With similar calculations, we can find an $r_0$ such that $$\lambda_1(\Omega_{r_0}, \bar g) \leq c_1 \lambda_1 (\tilde\Omega_{r_0}, g_{S^2}) < \varepsilon.$$

\end{proof}

\begin{proof}[Proof of Corollary \ref{cor:non_rig_closed}]
Let $\Omega \subset M$ be the domain in Lemma \ref{lem:reg_domain} with $$\lambda_1(\Omega, \bar g) < \Lambda,$$ where $\Lambda : = \min_{x \in M} \frac{R_{\bar g}(x)}{n-1} > 0$. Then we have $$\Lambda_1(\Omega, \bar g) \leq \lambda_1 (\Omega, \bar g) - \Lambda < 0.$$
The conclusion follows from Theorem \ref{thm:deformation}.
\end{proof}

And for noncompact case:

\begin{proof}[Proof of corollary \ref{cor:non_rig_noncpt}]
 Since $(M, \bar g)$ has quadratic volume growth, we have the first Dirichlet eigenvalue of Laplacian satisfies that $$\lambda_1(M, \bar g) = 0$$ (see Proposition 9 in \cite{C-Y}) and it implies that for any $\varepsilon > 0$ there is a compactly contained domain $\Omega \subset M$ with $$\lambda_1 (\Omega, \bar g) < \varepsilon.$$ In particular, if we take $\varepsilon < \frac{Q}{2}$, we have $$\Lambda_1(\Omega, \bar g) \leq \lambda_1(\Omega, \bar g) - Q < - \frac{Q}{2} < 0.$$ Now by Theorem \ref{thm:deformation}, the corollary follows.

\end{proof}

\bibliographystyle{amsplain}

\end{document}